\newtheorem{theorem}{Theorem}
\newtheorem{corollary}{Corollary}
\definecolor{RED}{rgb}{0.6,0.,0.}
\definecolor{BLUE}{rgb}{0.,0.,0.6}
\definecolor{GREEN}{rgb}{0.,0.6,0.}
\definecolor{MALINA}{rgb}{0.6,0.,0.6}
\definecolor{YELLOW}{rgb}{0.8,0.8,0}
\newcommand{\squeezeup}{\vspace{-1.5 mm}}
\begin{document}
\title{Topology Estimation in Bulk Power Grids:\\ Guarantees on Exact Recovery}
\author{
\IEEEauthorblockN{Deepjyoti~Deka\dag, Saurav~Talukdar\ddag, Michael~Chertkov\dag, and Murti~Salapaka \ddag}\\
\IEEEauthorblockA{(\dag) Los Alamos National Laboratory, New Mexico, USA,
(\ddag)University of Minnesota Twin Cities, Minneapolis, USA}
}

\maketitle
\begin{abstract}
The topology of a power grid affects its dynamic operation and settlement in the electricity market. Real-time topology identification can enable faster control action following an emergency scenario like failure of a line. This article discusses a graphical model framework for topology estimation in bulk power grids (both loopy transmission and radial distribution) using measurements of voltage collected from the grid nodes. The graphical model for the probability distribution of nodal voltages in linear power flow models is shown to include additional edges along with the operational edges in the true grid. Our proposed estimation algorithms first learn the graphical model and subsequently extract the operational edges using either thresholding or a neighborhood counting scheme. For grid topologies containing no three-node cycles (two buses do not share a common neighbor), we prove that an exact extraction of the operational topology is theoretically guaranteed. This includes a majority of distribution grids that have radial topologies. For grids that include cycles of length three, we provide sufficient conditions that ensure existence of algorithms for exact reconstruction. In particular, for grids with constant impedance per unit length and uniform injection covariances, this observation leads to conditions on geographical placement of the buses. The performance of algorithms is demonstrated in test case simulations.
\end{abstract}
\begin{IEEEkeywords}
Concentration matrix, Conditional independence, Counting, Distribution grids, Graphical lasso, Graphical models, Power flows, Transmission grids.
\end{IEEEkeywords}

\section{Introduction}
\label{sec:intro}
The power grid comprises of the set of transmission lines that transfer power from generators to the end users. Structurally, the grid is described as a graph with nodes representing buses and edges representing the transmission lines. It is worth noting that high voltage transmission grids are topologically loopy (with cycles) while medium voltage distribution grids are generally radial (no cycles) in structure \cite{hoffman2006practical}. The true operational topology in either case is determined by the current breaker/switch statuses on an underlying set of permissible edges as shown in Fig.~\ref{fig:city}. Topology estimation/learning refers to the problem of determining the current operational topology and is a necessary part for majority of control and optimization problems in the dynamic and static regimes of grid operation. Real-time topology estimation can enable timely detection of line failures and identification of critical lines that affect locational marginal prices. Such estimation is hampered by the limited presence of real-time line-based measurements especially in medium and low voltage part of the grid. In recent years, there has been a surge in installation of nodal/bus based measurement devices like phasor measurement units (PMUs) \cite{PMU}, micro-PMUs \cite{micropmu}, FNETs \cite{FNET}, smart sensors that record high-fidelity real-time measurements at nodes/buses (not lines) to enhance observability and then use the information to control, e.g. smart devices. We analyze the problem of real-time topology estimation using voltage measurements collected from meters located at the grid nodes through the framework of graphical models.
\subsection{Prior Work}
Research in topology estimation in the power grid has explored different directions to utilize the available measurements under varying operating regimes. Primarily such research has focused on distribution grids that are operationally radial in topology and use measurements from static power flow models. \cite{bolognani2013identification} presents a topology identification algorithm for radial grids with constant $r/x$ (resistance to reactance) ratio for transmission lines using signs of elements in the inverse covariance matrix of voltage magnitudes. In a similar radial setting, \cite{dekapscc} presents the use of conditional independence based tests to identify the radial topology from voltage measurements for general distributions of nodal injections. Greedy schemes to identify radial topologies based on provable trends in second moments of voltage magnitudes are presented in \cite{dekatcns}. This has been extended to cases with missing nodes in \cite{dekaecc, dekasmartgridcomm}. Signature/envelope based identification of topology changes is proposed in \cite{berkeley,arya}. In \cite{sharon2012topology}, a machine learning (ML) topology estimate with approximate schemes is used in a distribution grid with Gaussian loads. For loopy power grids, approximate schemes for topology estimation have been discussed in \cite{ram_loop} but do not have guarantees on exact recovery. For measurements from grid dynamics, \cite{sauravacc, sauravacm} discuss the use of Wiener filters for topology estimation in radial and loopy grids.

\subsection{Contribution}
In this work, we present a graphical model \cite{wainwright2008graphical} based learning scheme for topology estimation in general power grids, both loopy and radial, using linear power flow models. Our approach factorizes the empirical distribution of nodal voltages collected by the meters and then extracts the true topology (operational edges) from the estimated graphical model. We show that for general grids, the graphical model of the distribution of nodal voltage measurements is a super-set of the original topology. We present algorithms and conditions under which the true grid topology can be extracted from the estimated graphical model. For radial grids, we show that a local neighborhood counting scheme or thresholding in graphical model concentration matrix are capable of recovering the true topology, irrespective of the line impedance values. For grids with cycles, the thresholding scheme is guaranteed to estimate the true topology when minimum cycle length is greater than three (no triangular cycles), while the neighborhood counting method is able to estimate for cases with minimum cycle lengths greater than six. Finally for grids with triangles, we provide sufficient conditions under which thresholding based tests are able to recover the true topology. These conditions depend on the line impedances and injection fluctuations at the grid nodes. In particular, for grids with constant impedance per unit length and similar injection fluctuations, we present sufficiency conditions for exact recovery that depend on geometry of the grid layout.

The next section presents nomenclature and power flow relations in bulk power grids. Section \ref{sec:graphicalmodel} develops the graphical model of power grid voltage measurements under two linear power flow models and discusses key aspects of its structure. Section \ref{sec:graphicalmodel} includes a discussion of efficient schemes to estimate the graphical model. The first topology learning using neighborhood counting is presented in Section \ref{sec:neigh} along with with conditions for exact recovery. Section \ref{sec:thres} describes the second learning algorithm that includes thresholding of values in the inverse covariance matrix of nodal voltages. Section \ref{sec:simulations} includes simulations results of our work on IEEE test cases. Conclusions and future work are included in Section \ref{sec:conclusion}.

\section{Power Grid: Structure and Power Flows}
\label{sec:structure}
\textbf{Structure}: We represent the topology of the power grid by the graph ${\cal G}=({\cal V},{\cal E})$, where ${\cal V}$ is the set of $N+1$ buses/nodes of the graph and ${\cal E}$ is the set of undirected lines/edges that represent the operational lines. This operational grid is determined by closed switches/breakers in an underlying set of permissible lines ${\cal E}^{full}$ (see Fig.~\ref{fig:city}). For distribution grids, the operational edge set $\cal E$ represents a tree (radial network) while for transmission grid the operational network is loopy in general. The operational lines are unknown to the observer who may or may not have access to the permissible edge set ${\cal E}^{full}$. As ${\cal E}^{full}$ may not be known, we consider all node pairs as permissible edges. We denote nodes by alphabets $i$, $j$ and so on. The undirected edge connecting nodes $i$ and $j$ is denoted by $(ij)$. Let ${\cal P}^{j}_{i} \equiv \{(ik_1), (k_1k_2),...(k_{n-1}j)\}$ denote a set of $n$ distinct undirected edges in $\cal G$ that connect node $i$ and node $j$. We call ${\cal P}^{j}_{i}$ as a path of length $n$ from $i$ to $j$. If $i = j$ then path ${\cal P}^{j}_{i}$ is termed a `cycle'. By definition, a cycle has length three or more. Note that for radial networks, there is a unique path between each distinct pair of nodes, while for loopy networks there can be multiple paths between a node pair. An example of a path is shown in Fig.~\ref{fig:city}. The length of shortest path between two nodes is called the `separation' between them. The set of nodes that share an edge with node $i$ are called its neighbors, while the set of nodes that are separated by two hops from node $i$ are called its two-hop neighbors. Nodes with degree one are termed `leaves'. The single neighbor of a leaf is called its `parent'.
\begin{figure}[!bt]
\centering
\includegraphics[width=0.35\textwidth, height =.26\textwidth]{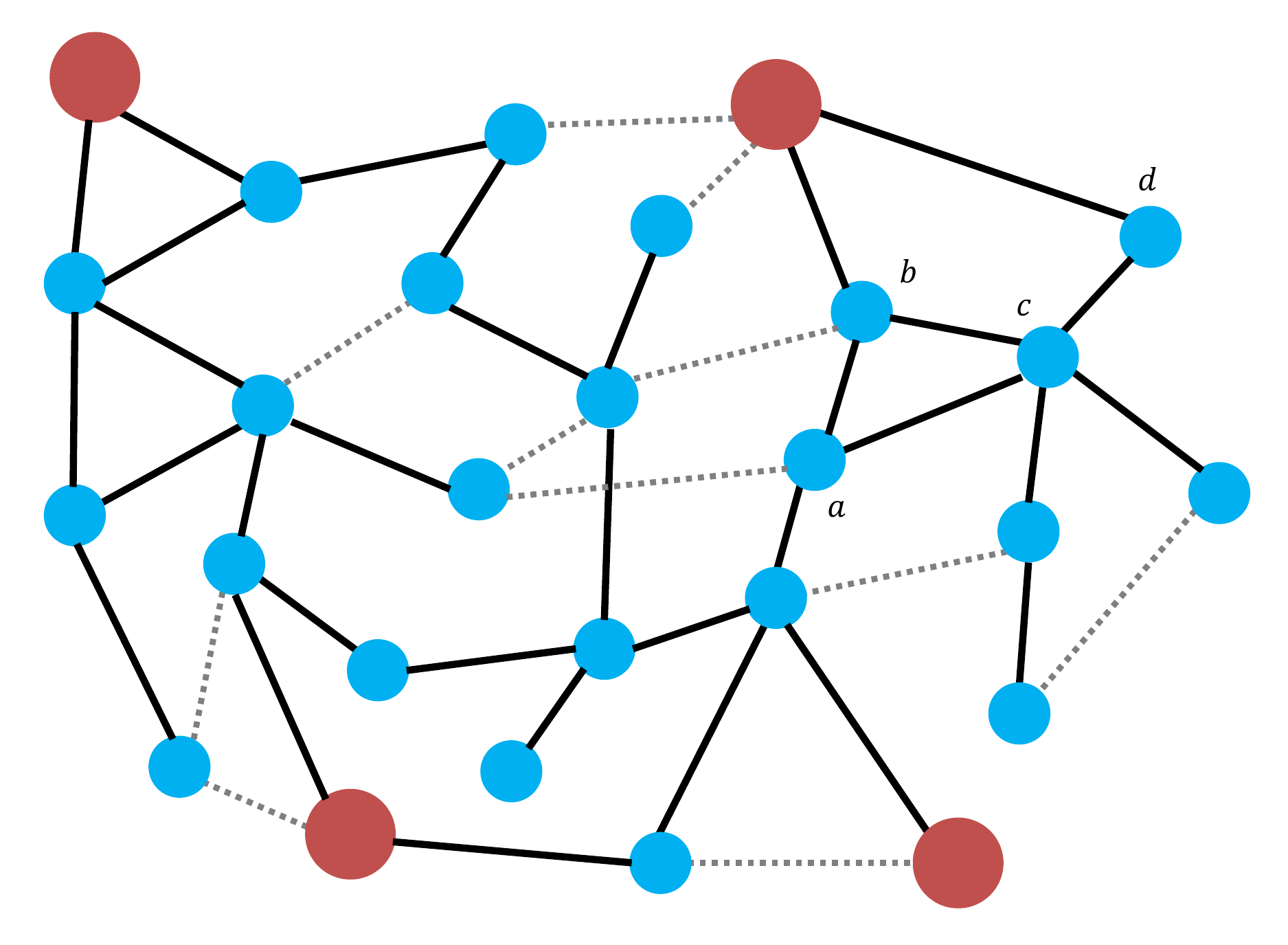}
%\vspace{-.25cm}
\caption{Example of a power grid with $4$ substations. Substations are represented by large red nodes. The operational grid is formed by solid lines (black). Dotted grey lines represent open switches. ${\cal P}_a^b \equiv \{(ab), (bc), (cd)\}$ represents a non-unique path from $a$ to $d$.
\label{fig:city}}
%\vspace{-3mm}
\end{figure}
Next we discuss the power flow models in the grid.

\textbf{Power Flow (PF) Models}: Let $z_{ij}=r_{ij}+\hat{i} x_{ij}$ denote the complex impedances of line $(ij)$ in the grid ($\hat{i}^2=-1$) with resistance $r_{ij}$ and reactance $x_{ij}$. By Kirchhoff's laws, the complex valued PF equation for the flow of power out of a node $i$ in grid ${\cal G}$ is given by,
\begin{align}
 P_i =p_i+\hat{i} q_i&=\underset{j:(ij)\in{\cal E}}{\sum} V_i(V_i^* - V_j^*)/z_{ij}^*\label{P-complex},\\
&= \underset{j:(ij)\in{\cal E}}{\sum}\frac{v_i^2-v_i v_j\exp(\hat{i}\theta_i-\hat{i}\theta_j)}{z_{ij}^*},\label{P-complex1}
\end{align}
where, the real valued scalars, $v_i$, $\theta_i$, $p_i$ and $q_i$ denote the voltage magnitude, voltage phase, active and reactive power injections respectively at node $i$. $V_i (= v_i\exp(\hat{i}\theta_i))$ and $P_i$ denote the nodal complex voltage and injection respectively. During normal operation in bulk power systems, one can assume that the grid is lossless and the net power (generation minus demand) in the grid is zero. One bus then is considered as reference bus, with its power injection equal to negative of the sum of injections at all other buses. Further the voltage and phase at all other buses are measured relative to the respective values at the reference bus which has $0$ phase and $1$ p.u. voltage magnitude. Without a loss of generality, we ignore the reference node/bus and restrict the power flow analysis to the $N$ non-reference buses in the grid. Under the loss-less assumption, we use the following well-known relaxations to the PF equations.

\textbf{DC Power Flow (DC-PF) model} \cite{abur2004power}: Here all lines in the grid are considered to be purely susceptive, voltage magnitudes are assumed to be constant at unity and phase differences between neighboring lines are assumed to be small. This leads to the following linear relation between active power flows and phase angles,
\begin{align}
\forall i\in {\cal V}: &p_i = \sum_{j:(ij)\in{\cal E}} \beta_{ij}(\theta_i-\theta_j), \nonumber\\
\text{which, in vector form is,~} &p = H_{\beta}\theta. \label{DC_PF}
\end{align}
Here, $H_{\beta}$ is given by,
\begin{align}
 H_{\beta}(i,j) = \begin{cases}\sum_{k:(ik) \in {\cal E}}\beta_{ik} ~~\text{if~} i=j\\
-\beta_{ij} ~~\text{if~} (ij) \in {\cal E}\\
 0 ~~\text{otherwise}\end{cases}.\label{Laplacian}
\end{align}
Thus, $H_{\beta}$ is the reduced weighted Laplacian matrix for the grid ${\cal G}$ with edge weights given by susceptances $\beta$ ($\beta_{ij}=\frac{x_{ij}}{x_{ij}^2+r_{ij}^2}$ for edge $(ij)$). The reduction is derived by removing the row and column corresponding to the reference bus from the weighted Laplacian matrix.

\textbf{Linear Coupled Power Flow (LC-PF) model} \cite{dekatcns,bolognani2016existence}: The a.c. power flow equations in (\ref{P-complex}) are linearized assuming small deviations in both phase difference of neighboring nodes ($|\theta_i - \theta_j| <<1$ for edge $(ij)$), and voltage magnitude deviations from the reference bus ($|v -1|<<1$), which leads to,
\begin{align}
&\begin{bmatrix}v \\\theta\end{bmatrix}= \begin{bmatrix}H_g &H_{\beta}\\ H_{\beta} &-H_{g}\end{bmatrix}^{-1}\begin{bmatrix}p \\q\end{bmatrix}, \label{PF_LPV_p}
%+\hat{i}\theta = H^{-1}_{1/z}(p -\hat{i}q)\label{PF_LPV_p}
\end{align}
where
For radial grids, it can be verified that
\begin{align}
\begin{bmatrix}H_g &H_{\beta}\\ H_{\beta} &-H_{g}\end{bmatrix}^{-1} = \begin{bmatrix}H^{-1}_{1/r} &H^{-1}_{1/x}\\ H^{-1}_{1/x} &-H^{-1}_{1/r}\end{bmatrix}\nonumber
\end{align}
where $H_{1/r}$ ($H_{1/x}$) is the reduced weighted Laplacian with the weight associated with each edge $(ij)$ being $1/r_{ij}$ ($1/x_{ij}$). This makes Eq.~(\ref{PF_LPV_p}) equivalent to the LinDistFlow model \cite{89BWa,89BWb,89BWc} used in radial networks. Thus the LC-PF equations are a generalization of the LinDistFlow equations to general loopy grids \cite{dekatcns}. Note that both the DC and LC models represent an invertible map between nodal injections and voltages at the non-substation buses. This property is used in determining the graphical model of nodal voltages discussed next.
\squeezeup
\section{Graphical Model of Voltages}
\label{sec:graphicalmodel}
Graphical models represent the structure within a multi-variate probability distribution. In this section, we develop the graphical model for the distribution of nodal voltages in the linear power flow models discussed (DC-PF and LC-PF). First, we make the following assumption regarding the injections at the non-reference nodes in the grid.

\textbf{Assumption $1$}: Injection fluctuations at non-reference nodes in the grid are independent zero-mean Gaussian random variables with non-zero covariances.

Note that over short to medium intervals, fluctuations in grid nodes are due to changes in loads or noise that can be assumed to be independent and uncorrelated. The mean of the fluctuations can be empirically estimated and de-trended and hence ignored without a loss of generality. In fact the Gaussian assumption is not necessary for majority of our analysis but is taken as the most accepted model of disturbance/noise.

For the DC-PF, the vector of active injections at all nodes $p$ in grid $\cal G$ is a Gaussian random variable ${\cal P}^{DC}(p)\equiv \mathcal{N}(0, \Sigma^{DC}_p)$ where $\Sigma^{DC}_p$ is the diagonal covariance matrix of active injections at the non-reference buses. On the other hand, in the LC-PF model, the injection vector \text{\footnotesize$\begin{bmatrix}p\\q\end{bmatrix}$} comprises of active and reactive injections and is modelled by the Gaussian random variable ${\cal P}^{LC}(p, q)\equiv \mathcal{N}(0, \Sigma^{LC}_{(p,q)})$. Here
\begin{align}
\squeezeup
\Sigma^{LC}_{(p,q)} = \begin{bmatrix}\Sigma^{LC}_{pp} &\Sigma^{LC}_{pq} \\ \Sigma^{LC}_{qp} &\Sigma^{LC}_{qq}\end{bmatrix}\label{sigma_lc}
\end{align}
is the covariance matrix between active and reactive injections at the node. Each block in $\Sigma^{LC}_{(p,q)}$ is a diagonal matrix. Note that as active and reactive injections at each node may be correlated, $\Sigma^{LC}_{pq} = \Sigma^{LC}_{qp}\neq 0$. %Under this assumption, the vector of injections at all nodes $P$ in grid $\cal G$ is a Gaussian random variable ${\cal P}(P)\equiv \mathcal{N}(0, \Sigma_P)$ where $\Sigma_p$ is the diagonal covariance matrix of nodal injections at the non-reference buses.

We now derive the probability distribution for nodal phase angles ${\cal P}^{DC}(\theta)$ in the DC-PF as,
\begin{align}
{\cal P}^{DC}(\theta) &= \frac{1}{|J_P^{DC}(\theta)|}{\cal P}^{DC}( H_{\beta}\theta)\nonumber,\\
&= \frac{1}{|J_P^{DC}(\theta)|}\prod_{i\in{\cal V}}{\cal P}^{DC}_i\left(\smashoperator[r]{\sum_{j:(ij)\in{\cal E}}} \beta_{ij}(\theta_i-\theta_j)\right) \label{GM_V}
\end{align}
Here $J^{DC}_P(\theta)$ is the constant Jacobian involved in the linear transformation from phase angles to injections. Note that the probability distribution for injections is in product form as the fluctuations are assumed to be independent and holds true for non-Gaussian distributions as well. For Gaussian injections, we can use the linear relation for DC-PF to derive the distribution for phase angles directly. This follows from the result that a linear function of Gaussian random variables is also Gaussian \cite{gubner2006probability}. The probability distribution of the nodal phase angles is given by:
\begin{align}
{\cal P}^{DC}(\theta)&\equiv\mathcal{N}(0, \Sigma^{DC}_{\theta}) \text{~where~}\Sigma^{DC}_{\theta} = H_{\beta}^{-1}\Sigma^{DC}_p H_{\beta}^{-1}\label{covar_theta}
\end{align}
where Eq.~(\ref{covar_theta}) follows from Eq.~(\ref{DC_PF}). Similarly for the LC-PF model, the distribution of voltage magnitudes and phase angles $(v,\theta)$ is given by a Gaussian random variable with covariance matrix given by:
\begin{align}
\text{\small
$\Sigma^{LC}_{(v,\theta)} = \begin{bmatrix}H_g &H_{\beta}\\ H_{\beta} &-H_{g}\end{bmatrix}^{-1}\begin{bmatrix}\Sigma^{LC}_{pp} &\Sigma^{LC}_{pq} \\ \Sigma^{LC}_{qp} &\Sigma^{LC}_{qq}\end{bmatrix}\begin{bmatrix}H_g &H_{\beta}\\ H_{\beta} &-H_{g}\end{bmatrix}^{-1}$}\label{covar_varepsilon}
\end{align}
In either case, we represent the probability distribution of the nodal voltages using a graphical model. We first formally define a `Graphical Model'.

\textbf{Graphical Model}: For a $n$ dimensional random vector $X = [X_1, X_2,..X_n]^T$, the corresponding undirected graphical model ${\cal GM}$ \cite{wainwright2008graphical} has vertex set ${\cal V_{GM}}$ where each node represents one variable. For every node $i$, its graph neighbors form the smallest set of nodes $N(i) \subset {\cal V_{GM}}- \{i\}$ such that for any node $j \not\in N(i)$, the distribution at $i$ is conditionally independent of $j$ given set $N(i)$, i.e., ${\cal P}(X_i|X_ {N(i)},X_j) = {\cal P}(X_i|X_ {N(i)})$. For Gaussian graphical models, it is known that the edges in the graphical model for a random vector are given by the non-zero terms in the inverse covariance matrix (also called `concentration' matrix) \cite{wainwright2008graphical}. In other words, for node $i$, node $j \in N(i)$ if $\Sigma^{-1}(i,j) \neq 0$.

We first focus on the graphical model for the distribution of phase angles in the DC-PF model. The following theorem gives its graphical model representation.

\begin{theorem}\label{structure_DC_PF}
Consider grid graph $\cal G$ with nodal injection fluctuations modelled as independent Gaussian random variables. The graphical model for nodal phase angles in the DC-PF model consists of edges between nodes representing phase angles at all neighbors and two-hop neighbors in $\cal G$.\end{theorem}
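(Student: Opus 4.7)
The plan is to translate the graph-theoretic claim into a linear-algebraic claim about the sparsity pattern of the concentration matrix. For a Gaussian vector, the graphical model edges are exactly the off-diagonal nonzero entries of the inverse covariance matrix, so I need to identify the support of $(\Sigma^{DC}_\theta)^{-1}$.

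Starting from Eq.~(\ref{covar_theta}), I would invert to obtain the concentration matrix
\begin{align*}
J^{DC}_\theta \;=\; \left(\Sigma^{DC}_\theta\right)^{-1} \;=\; H_\beta\,\left(\Sigma^{DC}_p\right)^{-1} H_\beta,
\end{align*}
using that $H_\beta$ is symmetric and invertible on the non-reference buses, and that $\Sigma^{DC}_p$ is diagonal with strictly positive diagonal entries $\sigma_k^2$ by Assumption~1. Writing $D := (\Sigma^{DC}_p)^{-1}$, the $(i,j)$ entry decomposes as
\begin{align*}
J^{DC}_\theta(i,j) \;=\; \sum_{k\in{\cal V}} D(k,k)\,H_\beta(i,k)\,H_\beta(k,j).
\end{align*}
The next step is to identify which indices $k$ contribute. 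By the definition of $H_\beta$ in Eq.~(\ref{Laplacian}), $H_\beta(i,k)\neq 0$ iff $k=i$ or $(ik)\in{\cal E}$, i.e.\ $k\in\{i\}\cup N(i)$. Hence the product $H_\beta(i,k)H_\beta(k,j)$ can be nonzero only when $k\in(\{i\}\cup N(i))\cap(\{j\}\cup N(j))$. Case analysis on this intersection gives three scenarios: (a) $k=i$ or $k=j$, which forces $(ij)\in{\cal E}$, so $j\in N(i)$; (b) $k\neq i,j$ is a common neighbor of $i$ and $j$, so $i$ and $j$ are two-hop neighbors in ${\cal G}$. If $i$ and $j$ are neither neighbors nor two-hop neighbors, the intersection is empty and $J^{DC}_\theta(i,j)=0$, giving one inclusion.

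For the reverse inclusion, I need to rule out accidental cancellations in the sum. If $i,j$ are two-hop neighbors but not neighbors, each contributing term has the form $D(k,k)(-\beta_{ik})(-\beta_{kj})=D(k,k)\beta_{ik}\beta_{kj}$, and since $\beta_{ik},\beta_{kj},D(k,k)>0$, all contributions share the same (positive) sign and the sum is strictly positive. If $j\in N(i)$, the two $k\in\{i,j\}$ terms yield $-\beta_{ij}\bigl(D(i,i)\sum_{l\in N(i)}\beta_{il} + D(j,j)\sum_{l\in N(j)}\beta_{jl}\bigr)$, which is strictly negative, while any common-neighbor contribution is strictly positive; in general these have opposite signs. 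This is the one place where the proof is delicate, and I expect it to be the main obstacle: ruling out a measure-zero set of parameter values where the positive and negative contributions exactly cancel. I would handle it by invoking the standard genericity argument for Gaussian graphical models---the cancellation condition defines a proper algebraic variety in the space of $(\beta,D)$ parameters and so fails to occur for all but a Lebesgue-null set of impedances and covariances---or, alternatively, by observing that the statement is phrased as a structural (support) claim that holds for generic weights. Combining the two inclusions then yields that $J^{DC}_\theta(i,j)\neq 0$ exactly when $i$ and $j$ are neighbors or two-hop neighbors in ${\cal G}$, which is precisely the stated graphical-model description.
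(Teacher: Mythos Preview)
Your approach matches the paper's: compute $(\Sigma^{DC}_\theta)^{-1}=H_\beta(\Sigma^{DC}_p)^{-1}H_\beta$ from Eq.~(\ref{covar_theta}) and read the support of the $(i,j)$ entry from the Laplacian structure in Eq.~(\ref{Laplacian}), arriving at the same three-case formula. If anything you are more thorough---the paper simply asserts the neighbor and two-hop entries are nonzero without addressing the cancellation you flag, whereas you correctly note that the two-hop-only case is sign-definite and the neighbor-with-common-neighbor case needs a genericity caveat (a point the paper's later Theorems~\ref{threshld_loopy}--\ref{threshld_loopy1} on triangles implicitly concede).
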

\begin{proof}
To determine the edges in the graphical model, we analyze the inverse covariance matrix of the phase angles. Using Eqs.~(\ref{Laplacian}, \ref{covar_theta}), it is clear that for $i \neq j$\squeezeup
\begin{align}
{\Sigma^{DC}_{\theta}}^{-1}(i,j) = \begin{cases}-\beta_{ij}(\frac{\beta_i}{\Sigma^{DC}_p(i,i)}+\frac{\beta_j}{\Sigma^{DC}_p(j,j)})+ \\\smashoperator[r]{\sum_{k \neq i,j}}\frac{\beta_{ik}\beta_{jk}}{\Sigma^{DC}_p(k,k)} \text{~if edge $(ij)\in {\cal E}$}\\
\smashoperator[r]{\sum_{k \neq i,j}}\frac{\beta_{ik}\beta_{jk}}{\Sigma^{DC}_p(k,k)} \text{~if $i,j$ are two hops away},\\
0 \text{~otherwise} \end{cases}\nonumber %\label{covar_phase_inverse}
\end{align}
Here $\beta_i = \sum_{j \neq i} \beta_{ij}$. It is thus clear that the inverse covariance matrix, for each node $i$ has non-zero terms at its neighbors and two-hop neighbors, which define the edges in the graphical model.
\end{proof}
An example of a grid and the associated graphical model for phase angles in DC-PF are given in Fig.~\ref{fig:grid} and \ref{fig:graphical_DC} respectively.
\begin{figure}[tb]
\centering\hfill
\subfigure[]{\includegraphics[width=0.44\columnwidth]{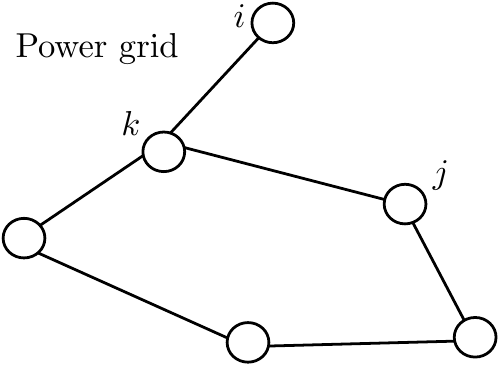}\label{fig:grid}}\hfill
\subfigure[]{\includegraphics[width=0.53\columnwidth]{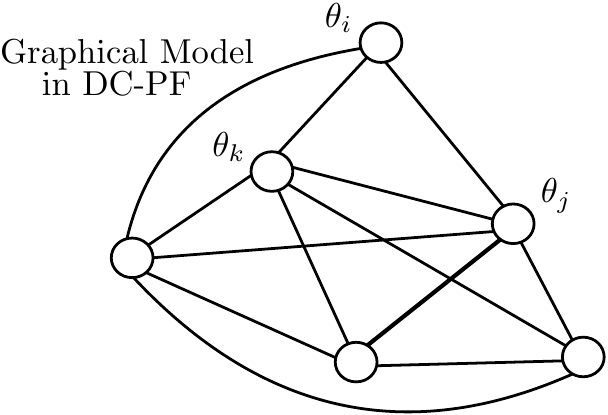}\label{fig:graphical_DC}}\hfill
\subfigure[]{\includegraphics[width=0.53\columnwidth]{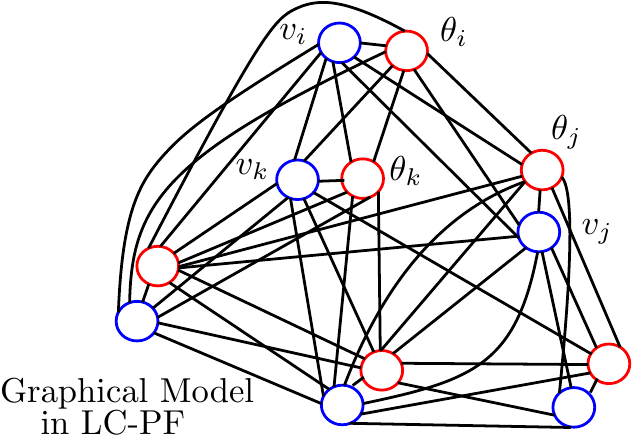}\label{fig:graphical_LC}}\hfill
\subfigure[]{\includegraphics[width=0.45\columnwidth]{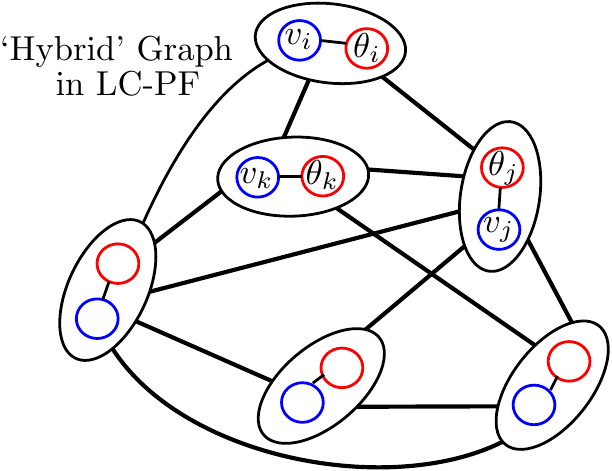}\label{fig:graphical_hybrid}}\hfill
\caption{(a) Power grid graph ${\mathcal{G}}$ (b) Graphical model for phase angles in DC-PF (c) Graphical model for voltage magnitudes and phase angles in LC-PF (d) `Hybrid' graph by combining nodes for same bus in graphical model for LC-PF. \label{fig:graphical_model}}
\end{figure}
Next we look at the graphical model of voltage magnitudes and phase angles in the LC-PF model. As each grid node is described by voltage magnitude and phase angle, the number of nodes in the graphical model is twice the number of grid buses. The following theorem gives the inverse of the covariance matrix $\Sigma^{LC}_{(v,\theta)}$ in LC-PF.

\begin{theorem}\label{inverse_LC_PF}
For LC-PF, the inverse covariance matrix ${\Sigma^{LC}_{(v,\theta)}}^{-1}$ satisfies
\begin{align*}
&{\Sigma^{LC}_{(v,\theta)}}^{-1} = \begin{bmatrix} J_{vv} &J_{v\theta}\\J_{\theta v} &J_{\theta\theta}\end{bmatrix}\text{~where~}\\
&J_{vv}=\text{\small $H_{g}D^{-1}(\Sigma^{LC}_{qq}H_{g}-\Sigma^{LC}_{pq}H_{\beta})-H_{\beta}D^{-1}(\Sigma^{LC}_{pq} H_{g}-\Sigma^{LC}_{pp}H_{\beta})$}\nonumber\\
&J_{v\theta}=\text{\small $H_{\beta}D^{-1}(\Sigma^{LC}_{qq}H_{\beta} +\Sigma^{LC}_{pq}H_{g})-H_{\beta}D^{-1}(\Sigma^{LC}_{pq}H_{\beta}+\Sigma^{LC}_{pp}H_{g})$}\nonumber\\
&J_{\theta v}=\text{\small $H_{\beta}D^{-1}(\Sigma^{LC}_{qq}H_{g} -\Sigma^{LC}_{pq}H_{\beta})+H_{g}D^{-1}(\Sigma^{LC}_{pq} H_{g}-\Sigma^{LC}_{pp}H_{\beta})$}\nonumber\\
&J_{\theta\theta}=\text{\small $H_{\beta}D^{-1}(\Sigma^{LC}_{qq}H_{\beta} +\Sigma^{LC}_{pq}H_{g})+H_{g}D^{-1}(\Sigma^{LC}_{pq}H_{\beta}+\Sigma^{LC}_{pp}H_{g})$}\nonumber
\end{align*}
where matrix $D$ is diagonal with $$D(i,i) = |\Sigma^{LC}_{pp}(i,i)\Sigma^{LC}_{qq}(i,i)-{\Sigma^{LC}_{pq}}^2(i,i)|$$
\end{theorem}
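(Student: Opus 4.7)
The plan is to exploit the factored form of $\Sigma^{LC}_{(v,\theta)}$ given by Eq.~(\ref{covar_varepsilon}) together with the fact that all four sub-blocks of the injection covariance $\Sigma^{LC}_{(p,q)}$ in Eq.~(\ref{sigma_lc}) are diagonal matrices, and hence pairwise commute. Writing $M=\begin{bmatrix}H_g & H_{\beta}\\ H_{\beta} & -H_{g}\end{bmatrix}$ and $S=\Sigma^{LC}_{(p,q)}$, Eq.~(\ref{covar_varepsilon}) reads $\Sigma^{LC}_{(v,\theta)}=M^{-1} S\, M^{-1}$ where $M$ is symmetric, so $\Sigma^{LC}_{(v,\theta)}{}^{-1}=M\,S^{-1} M$. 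The proof therefore reduces to (i) inverting $S$ in closed form and (ii) performing the triple block product.

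For step (i), since each of $\Sigma^{LC}_{pp},\Sigma^{LC}_{pq},\Sigma^{LC}_{qq}$ is diagonal, the $2\times 2$ block matrix $S$ decomposes pointwise into $N$ independent $2\times 2$ inversions, one per bus. Inverting each $2\times 2$ matrix by the cofactor formula and reassembling gives
\begin{equation*}
S^{-1}=\begin{bmatrix} D^{-1}\Sigma^{LC}_{qq} & -D^{-1}\Sigma^{LC}_{pq}\\ -D^{-1}\Sigma^{LC}_{pq} & D^{-1}\Sigma^{LC}_{pp}\end{bmatrix},
\end{equation*}
with $D$ as defined in the statement of the theorem. The key structural point, used throughout the rest of the argument, is that $D$ and all the $\Sigma^{LC}_{\cdot\cdot}$ blocks commute freely (they are simultaneously diagonal), while they do not in general commute with $H_g$ or $H_{\beta}$; this dictates the order in which factors can be rearranged.

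For step (ii), I would first compute $MS^{-1}$ by ordinary $2\times 2$ block multiplication, grouping the occurrences of $D^{-1}$ together with the adjacent injection-covariance block on the right, which is legitimate by the commutation remark above. This yields a $2\times 2$ block matrix whose entries are of the form $H_{g}D^{-1}\Sigma^{LC}_{\cdot\cdot} \pm H_{\beta}D^{-1}\Sigma^{LC}_{\cdot\cdot}$. Multiplying the result on the right by $M$ and again collecting $D^{-1}$ between the injection covariance and $H_g$ or $H_{\beta}$ factors produces exactly the four expressions claimed for $J_{vv},J_{v\theta},J_{\theta v},J_{\theta\theta}$. Tracking the signs introduced by the $-H_g$ in the lower-right of $M$ is what makes the diagonal and off-diagonal block entries differ.

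I do not expect a genuine obstacle: the argument is essentially a bookkeeping exercise once the two observations (symmetry of $M$ and commutativity of the injection-covariance blocks) are in place. The main care required is (a) to justify moving $D^{-1}$ past the $\Sigma^{LC}_{\cdot\cdot}$ blocks but not past $H_g,H_{\beta}$, and (b) to keep the sign pattern straight when expanding the two block products. The explicit formula $D(i,i)=|\Sigma^{LC}_{pp}(i,i)\Sigma^{LC}_{qq}(i,i)-\Sigma^{LC}_{pq}{}^2(i,i)|$ in the statement is simply the absolute value of the determinant of the per-bus $2\times 2$ injection covariance, which is positive by the positive-definiteness guaranteed under Assumption~$1$, so the absolute value is only cosmetic.
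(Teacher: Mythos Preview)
Your proposal is correct and follows essentially the same approach as the paper: invert the factored expression in Eq.~(\ref{covar_varepsilon}) by inverting the injection covariance $S=\Sigma^{LC}_{(p,q)}$ via the per-bus $2\times 2$ cofactor formula, then carry out the block product $M S^{-1} M$. Your write-up is in fact more explicit than the paper's about why $M$ is symmetric, why $D^{-1}$ commutes with the $\Sigma^{LC}_{\cdot\cdot}$ blocks but not with $H_g,H_{\beta}$, and why the absolute value in $D(i,i)$ is cosmetic under Assumption~$1$.
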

\begin{proof}
We derive the inverse of $\Sigma^{LC}_{(v,\theta)}$ by inverting each matrix on the right side of Eq.~(\ref{covar_varepsilon}). From Assumption 1, it is clear that each block in $\Sigma^{LC}_{(p,q)}$ (see Eq.~(\ref{sigma_lc})) is a diagonal matrix with blocks $\Sigma^{LC}_{pq} = \Sigma^{LC}_{qp}$. Using Schur's compliment expansion or by direct multiplication it can be verified that,
\begin{align}
\begin{bmatrix}\Sigma^{LC}_{pp} &\Sigma^{LC}_{pq} \\ \Sigma^{LC}_{pq} &\Sigma^{LC}_{qq}\end{bmatrix}^{-1} = \begin{bmatrix}D^{-1}\Sigma^{LC}_{qq} &-D^{-1}\Sigma^{LC}_{pq}\\ -D^{-1}\Sigma^{LC}_{pq} &D^{-1}\Sigma^{LC}_{pp}\end{bmatrix},\nonumber
\end{align}
where, $D$ is a diagonal matrix specified in the theorem statement. Note that the $i^{th}$ diagonal entry in $D$ is given by the determinant of $\begin{bmatrix}\Sigma_p(i,i) &\Sigma_{pq}(i,i) \\\Sigma_{pq}(i,i) &\Sigma_q(i,i)\end{bmatrix}$, the injection covariance matrix at node $i$. Multiplying the individual inverses result in the expression given in the statement of the theorem.
\end{proof}
%%replace ndoes by bus
Using the expression for inverse of $\Sigma^{LC}_{(v,\theta)}$, we present the structure of the graphical model of nodal voltage magnitudes and phase angles in the LC-PF model.
\begin{theorem}\label{structure_LC_PF}
Consider grid graph $\cal G$ with nodal injection fluctuations modelled as independent Gaussian random variables. The graphical model for nodal voltage magnitudes and phase angles in the LC-PF model consists of edges between voltage magnitudes and phase angles at the same bus,  at all neighbors, and at two-hop neighbors in $\cal G$.
\end{theorem}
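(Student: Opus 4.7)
The plan is to leverage the explicit block form of ${\Sigma^{LC}_{(v,\theta)}}^{-1}$ given by Theorem~\ref{inverse_LC_PF}: edges in the graphical model correspond to off-diagonal nonzeros of this $2N \times 2N$ matrix, so it suffices to locate the nonzero entries of each of the four blocks $J_{vv}$, $J_{v\theta}$, $J_{\theta v}$, $J_{\theta\theta}$ and then translate them back to pairings of voltage-magnitude and phase-angle variables.

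First I would record two structural facts. (i) Both $H_g$ and $H_\beta$ are reduced weighted Laplacians of $\cal G$, so their $(i,j)$ entries vanish unless $i=j$ or $(ij) \in \cal E$. (ii) By Assumption~1, each block of $\Sigma^{LC}_{(p,q)}$ is diagonal, and consequently the matrices $D^{-1}$, $\Sigma^{LC}_{pp}$, $\Sigma^{LC}_{qq}$, $\Sigma^{LC}_{pq}$ appearing in Theorem~\ref{inverse_LC_PF} are all diagonal. Every block in the concentration matrix is thus an integer linear combination of products of the form $H_a D^{-1} S H_b$, where $a,b \in \{g,\beta\}$ and $S$ is diagonal. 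A direct expansion $\sum_k H_a(i,k)\,[D^{-1}S](k,k)\,H_b(k,j)$ shows that the $(i,j)$ entry of any such product can be nonzero only when $k \in \{i,j\}$ or $k$ is a common neighbor of $i$ and $j$, forcing $i=j$, $(ij) \in \cal E$, or $i$ and $j$ to be two-hop neighbors in $\cal G$. This immediately gives the upper bound: no block can produce a graphical-model edge between nodes farther than two hops apart.

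Translating the block positions back to variables then yields exactly the claimed edge set. Off-diagonal entries of $J_{vv}$ and $J_{\theta\theta}$ generate magnitude-magnitude and phase-phase edges at neighbor and two-hop neighbor pairs; off-diagonal entries of $J_{v\theta}$ (and symmetrically $J_{\theta v}$) generate magnitude-phase edges at the same pairs; and the diagonal entries $J_{v\theta}(i,i)$ encode the within-bus edge between $v_i$ and $\theta_i$.

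The main obstacle, I expect, is the converse direction: ruling out spurious cancellations so that each candidate edge is actually present. Since each $J$-block in Theorem~\ref{inverse_LC_PF} is a difference of two structurally similar products, one must verify that neighbor and two-hop-neighbor entries do not accidentally vanish. I would handle this by mirroring the explicit calculation used in the proof of Theorem~\ref{structure_DC_PF}: expand a representative $(i,j)$ entry as a sum indexed by paths of length at most two between $i$ and $j$, and argue that the resulting polynomial expression in the impedance parameters $g_{ij}, \beta_{ij}$ and the injection covariances $\Sigma^{LC}_{pp}, \Sigma^{LC}_{qq}, \Sigma^{LC}_{pq}$ is generically nonzero, so the vanishing locus is confined to a measure-zero subset of parameter space. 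Combining the upper and lower bounds yields the stated edge set, and the remaining work is routine combinatorial bookkeeping.
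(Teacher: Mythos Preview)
Your proposal is correct and follows essentially the same route as the paper: invoke the block formulas for $J_{vv},J_{v\theta},J_{\theta v},J_{\theta\theta}$ from Theorem~\ref{inverse_LC_PF}, observe that each block is a sum of products $H_aD^{-1}SH_b$ with $S$ diagonal, and then reuse the Laplacian-product argument of Theorem~\ref{structure_DC_PF} to read off the sparsity pattern. The paper's own proof is in fact briefer than yours---it simply asserts the nonzeros by appeal to Theorem~\ref{structure_DC_PF} and does not separately address the cancellation issue you flag; your genericity remark is a reasonable addition rather than a deviation.
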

\begin{proof}
Consider four terms in the expression for $J_{v\theta}$ in Theorem \ref{inverse_LC_PF} where $D^{-1}, \Sigma^{LC}_{qq}, \Sigma^{LC}_{pq}, \Sigma^{LC}_{pp}$ are all diagonal matrices. Using analysis presented in Theorem \ref{structure_DC_PF}, we can show that $J_{v\theta}(i,j) \neq 0$ if $i = j$ or $i$ and $j$ correspond to neighbors or two-hop neighbors in $\cal G$. Thus the node representing voltage magnitude at $i$ in the graphical model is linked to nodes for phases at $i$, all neighbors and two-hop neighbors of $i$. A similar argument for $J_{vv}$, $J_{\theta v}$ and $J_{\theta\theta}$ proves the theorem.
\end{proof}
Theorem \ref{structure_LC_PF} thus implies that the graphical model for voltage magnitudes and phase angles in the LC-PF model does not include any edge between voltages corresponding to buses that are three or more hops away in grid $\cal G$. Indeed if we combine the voltage and phase angle at each bus into a `hybrid' node and connect such nodes if edges exist between their constituent voltages and phase angles in the graphical model, we get a `hybrid' graph of $N$ nodes that has the same structure as the graphical model for phase angles in the DC-PF. This is depicted in Figs.~\ref{fig:graphical_LC} and \ref{fig:graphical_hybrid}.

\textbf{Remarks:} A few points are in order.
\begin{itemize}[leftmargin=*]
\item For the general case with non-Gaussian but independent injection fluctuations, the probability distribution for phase angles in DC-PF is given in Eq.~(\ref{GM_V}). It can be shown that the graphical model in this case is also given by the true topological edges in the grid and edges between the two-hop neighbors (see details for a radial case in \cite{dekapscc}).
\item The graphical model for only voltage magnitude deviations (no phase angles) in the LC-PF model has a similar structure as discussed in Theorem \ref{structure_LC_PF}, if the resistance to reactance ratio on all lines is the same, i.e., $r/x = 1/\alpha$ (some constant). In this case, the inverse covariance of voltage magnitude fluctuations is given by ${\Sigma^{LC}_{v}}^{-1} = H_{1/r}{\Sigma^{LC}_{p+\alpha q}}^{-1}H_{1/r}$ (see \cite{bolognani2013identification}). However for non-constant $r/x$ ratio, the graphical model for voltage magnitudes alone may include several additional edges. This is unlike Theorem \ref{structure_LC_PF} where the structure is true even for non-constant $r/x$ values.
\end{itemize}

In the next section, we discuss methods to estimate the inverse covariance matrix and thereby learn the graphical model of voltages in DC-PF or LC-PF.

\section{Estimation of Graphical Model of Voltages}
\label{sec:graphicalmodellearning}
We describe the estimation of the inverse covariance matrix of phase angles ${\Sigma^{DC}_{\theta}}^{-1}$ for the DC-PF with Gaussian injection fluctuations with $0$ mean. The inverse covariance matrix for voltage magnitudes and phase angles for the LC-PF can be estimated using the same techniques. Consider $n$ i.i.d. samples of phase angle vectors $\{\theta^k,1\leq k\leq n\}$ in the grid. Note that if sufficient number of samples (much greater than the number of nodes) are available one can determine ${\Sigma^{DC}_{\theta}}^{-1}$ directly by \textbf{inverting the sample covariance matrix}.

On the other hand, if the number of samples are not large enough (comparable to the number of grid nodes), we consider the maximum likelihood estimator of a Gaussian graphical model \cite{wainwright2008graphical} with a constraint for the number of edges in the grid.
\begin{align}
{\Sigma^{DC}_{\theta}}^{-1} = \arg \min_{S} -&\log\det S +\langle S, \sum_k \frac{\theta^k{\theta^k}^T}{n}\rangle +\lambda\|S\|_1 \label{graphlasso}
\end{align}
Here $\sum_k \theta^k{\theta^k}^T /n$ represents the empirical covariance matrix of phase angles. Further, $|S|_1$ is the $l_1$-norm of the inverse covariance matrix and is a proxy for the $l_0$ norm that measures the sparsity of the grid edges. The optimization problem (\ref{graphlasso}) is convex and termed \textbf{Graphical Lasso} \cite{yuan2007model,tibshirani}. Further, one can use a regression based optimization \cite{lassograph} to determine entries in the inverse covariance matrix for each node $i$ in a distributed fashion. Given the estimate of graphical model for the DC-PF and LC-PF models, we present two methods to distinguish between true topological edges and the `spurious' edges between two-hop neighbors in the grid. The first method uses local counting of nodal neighborhoods in the graphical model to determine the true topological edges and is discussed in the next section.

\section{Topology Learning using Neighborhood Counting}
\label{sec:neigh}
Consider power grid $\cal G$ with at least three non leaf nodes (one path of length least $5$ exists in $\cal G$). Let the graphical model for nodal voltages in DC-PF or LC-PF be known. The objective of topology learning is to identify the true edges between the non-reference buses in the grid. We first consider learning when $\cal G$ is radial.

\subsection{Learning Radial Grids}
Consider the DC-PF model with nodal phase angles in radial grid $\cal G$. The following theorem enables us to distinguish between true and spurious edges in the graphical model.
\begin{theorem}\label{condind}
Let $\cal GM$ be the graphical model for nodal phase angles under the DC-PF model with Gaussian injection fluctuations in radial grid $\cal G$. \\
(a) For edge $(ij)$ in $\cal GM$, there exists nodes $k$ and $l$ separated by $2$ hops in $\cal GM$ with paths $k - i -l$ and $k-j-l$ iff $(ij)$ is a true edge in $\cal G$ between non-leaf nodes $i$ and $j$ \\
(b) Iff $(ki)$ is a true edge in $\cal G$ between leaf node $k$ and non-leaf node $i$, the non-leaf neighbors of $k$ in $\cal GM$ and the non-leaf neighbors of $i$ in $\cal G$ are the same.
\end{theorem}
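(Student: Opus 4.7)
The plan is to leverage Theorem~\ref{structure_DC_PF}, which characterizes edges of ${\cal GM}$ as exactly those pairs of nodes at ${\cal G}$-distance one (true edges) or two (spurious edges sharing a common ${\cal G}$-neighbor). Because ${\cal G}$ is radial, every pair of nodes is joined by a unique ${\cal G}$-path; I will use this uniqueness repeatedly to control ${\cal GM}$-distances. Each part is proved by a double implication: the forward direction constructs explicit witnesses, while the reverse direction rules them out in the spurious case using the tree structure.

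For the forward direction of part (a), I would assume $(ij)$ is a true edge with $i$, $j$ non-leaves, and pick $k \in N_{{\cal G}}(i) \setminus \{j\}$ and $l \in N_{{\cal G}}(j) \setminus \{i\}$, both guaranteed by the non-leaf hypothesis. Radiality gives $k \neq l$ (else $k$--$i$--$j$--$k$ would close a 3-cycle) and makes $k$--$i$--$j$--$l$ the unique ${\cal G}$-path between $k$ and $l$, of length three. Theorem~\ref{structure_DC_PF} then certifies $(ki),(kj),(li),(lj) \in {\cal GM}$ and $(kl) \notin {\cal GM}$, placing $k$ and $l$ at ${\cal GM}$-distance exactly two along both required two-hop paths through $i$ and through $j$. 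For the converse, I would suppose $(ij)$ is a spurious ${\cal GM}$-edge, so $i$ and $j$ share a unique common ${\cal G}$-neighbor $m$. Any node ${\cal GM}$-adjacent to both $i$ and $j$ must lie within ${\cal G}$-distance two of each, which by path uniqueness confines the candidate set to $C := \{m\} \cup (N_{{\cal G}}(m) \setminus \{i,j\})$. Any two distinct elements of $C$ are either ${\cal G}$-adjacent (when one is $m$) or share $m$ as a common ${\cal G}$-neighbor; in either case they are ${\cal GM}$-adjacent and thus at ${\cal GM}$-distance one, ruling out the hypothesized witness. The residual case where $(ij)$ is a true edge with one endpoint a leaf is dispatched similarly, since the leaf has no external neighbor to serve as $k$ or $l$.

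For part (b), my plan is to invoke Theorem~\ref{structure_DC_PF} directly: because the leaf $k$ has exactly one ${\cal G}$-neighbor (namely $i$), its ${\cal GM}$-neighborhood equals $N_{{\cal GM}}(k) = \{i\} \cup (N_{{\cal G}}(i) \setminus \{k\})$. Restricting both sides to nodes that are non-leaves in ${\cal G}$, and using that $k$ is itself a leaf, matches the non-leaf ${\cal GM}$-neighbors of $k$ to the non-leaf ${\cal G}$-neighborhood of $i$ (with $i$ playing the role of the distinguished extracted edge endpoint). For the reverse direction, I would argue that if some $k$ has non-leaf ${\cal GM}$-neighborhood coinciding with the non-leaf ${\cal G}$-neighborhood of a non-leaf $i$, then $k$ can have no ${\cal G}$-neighbor outside $\{i\}$: any additional ${\cal G}$-neighbor of $k$ would introduce a 2-hop ${\cal GM}$-connection to a node lying outside $N_{{\cal G}}(i)$, violating the set equality. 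Hence $k$ must be a leaf attached to $i$ by a true edge.

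I expect the main obstacle to be the converse of part (a): exhaustively enumerating the candidate set $C$ and verifying that every pair drawn from it collapses to ${\cal GM}$-distance one requires the radial hypothesis in full force, and the argument must handle the sub-case $m \in \{k,l\}$ separately from $m \notin \{k,l\}$. A secondary subtlety in part (b) is that $i$ itself belongs to $N_{{\cal GM}}(k)$ but not to the open neighborhood $N_{{\cal G}}(i)$; the stated ``sameness'' should therefore be read with $i$ as the identified edge endpoint, so that matching the remaining non-leaf entries is what distinguishes the true parent $i$ from the other (spurious) ${\cal GM}$-neighbors of $k$.
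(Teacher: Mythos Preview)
Your proposal is correct and follows essentially the same route as the paper's proof: for part (a) you construct witnesses $k,l$ on either side of a true edge and, for a spurious edge with common ${\cal G}$-neighbor $m$ (the paper's $c$), you confine all candidates to $\{m\}\cup(N_{\cal G}(m)\setminus\{i,j\})$ and observe that any two such candidates are ${\cal GM}$-adjacent; for part (b) you read off $N_{\cal GM}(k)$ directly from Theorem~\ref{structure_DC_PF} and argue the converse by contradiction. Your treatment is in fact more explicit than the paper's---you spell out the candidate set $C$, handle the residual leaf-endpoint case of (a), and flag the $i\in N_{\cal GM}(k)\setminus N_{\cal G}(i)$ subtlety in (b)---but the underlying ideas are identical.
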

\begin{proof}
Note that if $k,l$ are two hops away in $\cal GM$, they must be at least three hops away in $\cal G$. For the `If' part of statement (a), consider nodes $k$ and $l$ on either side of edge $(ij)$ in radial tree $\cal G$ as shown in Fig.~\ref{fig:radial1}. They are separated by $2$ hops in $\cal GM$ and connected by two-hop paths $k-i-l$ and $k-j-l$. For the converse, consider $i, j$ as two-hop neighbors with single common neighbor $c$ in $\cal G$. Note that existence of path $k-i-l$ and $k-j-l$ in $\cal GM$ requires $k$ and $l$ to be one and/or two-hop neighbors of both $i$ and $j$ in $\cal G$. As $\cal G$ is radial, this is possible only if $k$ and $l$ include $c$ or any of its immediate neighbors. However, this makes the separation between $k,l$ in $\cal GM$ equal to one hop. Thus no $k,l$ exist that are two hops away in $\cal GM$.

The `if' part of statement (b) follows immediately for each leaf node and its non-leaf parent in $\cal G$ (see Fig.~\ref{fig:radial1}). The converse statement can be proven through contradiction by considering a non-leaf node that is a two-hop neighbor of leaf node $k$ in $\cal G$. (See \cite{sauravacc} for detailed proof of a similar statement as statement (b)).
\end{proof}
Thus the first statement in the theorem enables the discovery of non-leaf nodes and topological edges between them in grid $\cal G$ through counting. The second statement subsequently identifies the edges connected to the leaf nodes leading to exact reconstruction.

\begin{figure}[bt]
\centering\hfill
\subfigure[]{\includegraphics[width=0.15\textwidth]{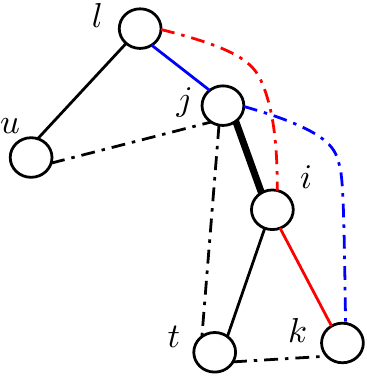}\label{fig:radial1}}\hfill
\subfigure[]{\includegraphics[width=0.15\textwidth]{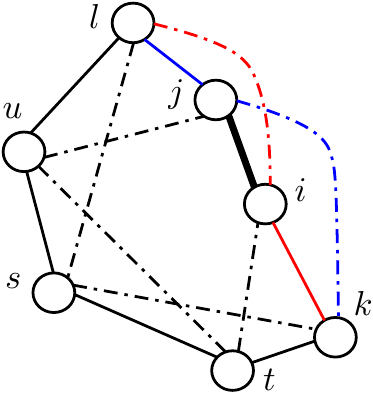}\label{fig:radial2}}\hfill
%\subfigure[]{\includegraphics[width=0.19\textwidth, height =.18\textwidth]{junctiontree3.pdf}\label{fig:junction3}}
\vspace{-.25cm}
\caption{Graphical model for phase angles in (a) radial grid (b) loopy grid with cycle length $7$. Solid lines denote true topological edges, dashed lines denote spurious edges in the graphical model. Red edges and blue edges represent two paths between nodes $k$ and $l$ of length $2$ in either graphical model.}
\label{fig:junction2}
\vspace{-3mm}
\end{figure}

\textbf{Note:} We consider the LC-PF model where the graphical model includes nodal voltage magnitudes and phase angles. As mentioned in the previous section, combining nodes pertaining to voltage magnitude and phase angle at the same bus leads to a `hybrid' graph with identical structure as the graphical model for phase angles in DC-PF. Thus, Theorem \ref{condind} can be used to estimate the true edges in radial grid $\cal G$ under $LC-PF$ model as well.

Next, we discuss learning the topology in loopy grids with cycle length greater than $6$.
\subsection{Learning Loopy Grids with cycle length greater than $6$}
The following theorem states that Statement (a) in Theorem \ref{condind} holds for loopy grid $\cal G$ with minimum cycle length greater than $6$ and can be used to distinguish between true and spurious edges.
\begin{theorem}\label{condind_loopy}
Let $\cal GM$ be the graphical model for nodal phase angles under the DC-PF model with Gaussian injection fluctuations in loopy grid $\cal G$ with cycle length greater than $6$. For edge $(ij)$ in $\cal GM$, there exists nodes $k$ and $l$ separated by $2$ hops in $\cal GM$ with paths $k-i-l$ and $k-j-l$ iff $(ij)$ is a true edge in $\cal G$ between non-leaf nodes $i$ and $j$.
\end{theorem}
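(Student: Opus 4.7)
The plan is to mirror the argument of Theorem~\ref{condind}(a) in the loopy case, using the girth bound $>6$ as a substitute for the stronger tree structure. I would first recall from Theorem~\ref{structure_DC_PF} that $\cal GM$-adjacency of two nodes is equivalent to their being within two hops in $\cal G$, so that $k,l$ being exactly two hops apart in $\cal GM$ translates to $d_{\cal G}(k,l)\ge 3$ together with existence of a common $\cal GM$-neighbor. With this dictionary in hand, the whole argument reduces to reasoning about short paths and cycles in $\cal G$, and girth $>6$ is exactly what rules out the bad configurations.

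For the ``if'' direction, I would suppose $(ij)\in {\cal E}$ with $i,j$ both non-leaf and pick $k\in N_{\cal G}(i)\setminus\{j\}$ and $l\in N_{\cal G}(j)\setminus\{i\}$, which exist by the non-leaf assumption. The two length-$2$ paths $k-i-l$ and $k-j-l$ in $\cal GM$ are automatic since each of $k,l$ lies within two hops of both $i$ and $j$ in $\cal G$. To conclude $d_{\cal GM}(k,l)=2$ one must rule out $k=l$ and $d_{\cal G}(k,l)\le 2$: $k=l$ would give a triangle $i-k-j-i$; a $k$--$l$ edge would give a $4$-cycle $i-k-l-j-i$; and a length-$2$ $k$--$l$ path through some $m$ would give a $5$-cycle $i-k-m-l-j-i$. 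Each is ruled out by girth $>6$, and each uses in an essential way that $(ij)$ is a true edge.

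For the ``only if'' direction I would argue the contrapositive in two subcases. If $(ij)$ is spurious, then $i,j$ are two-hop neighbors in $\cal G$ sharing a common neighbor $c$ that must be unique (otherwise $i-c-j-c'-i$ is a $4$-cycle). The main technical step is to show that every $w\notin\{i,j\}$ with $d_{\cal G}(w,i)\le 2$ and $d_{\cal G}(w,j)\le 2$ lies in $\{c\}\cup N_{\cal G}(c)$. I would do a short case split on $(d_{\cal G}(w,i),d_{\cal G}(w,j))\in\{1,2\}^2$: the $(1,1)$ case forces $w=c$ by uniqueness of the common neighbor; the mixed $(1,2)$ and $(2,1)$ cases produce either a triangle (if the $2$-hop intermediate coincides with $c$) or a $5$-cycle (otherwise); and the $(2,2)$ case either routes through $c$ as a shared intermediate, or with distinct intermediates $a\ne b$ either collapses to a $4$-cycle (if one equals $c$) or yields the $6$-cycle $w-a-i-c-j-b-w$. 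Since any two nodes in $\{c\}\cup N_{\cal G}(c)$ are at $\cal G$-distance at most $2$ and therefore $\cal GM$-adjacent, no valid $(k,l)$ can simultaneously satisfy $d_{\cal GM}(k,l)=2$. The leaf subcase is immediate: if $i$ is a leaf, then its $\cal GM$-neighbors all lie in $\{j\}\cup N_{\cal G}(j)$, so $k$ and $l$ are both $\cal G$-neighbors of $j$ and hence $\cal GM$-adjacent through $j$.

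The main obstacle I expect is the cycle bookkeeping in the spurious-edge subcase, where one has to check that no auxiliary intermediate node can coincide with $c$, $i$, or $j$ in a way that would shortcut the argument. Each subcase does, however, ultimately exhibit an explicit cycle of length at most $6$ in $\cal G$, and girth $>6$ then immediately delivers the contradiction, so once the enumeration is laid out the argument is a routine adaptation of Theorem~\ref{condind}(a).
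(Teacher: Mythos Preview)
Your proposal is correct and follows essentially the same route as the paper: both directions reduce to enumerating short configurations in $\cal G$ and ruling them out via the girth-$>6$ bound. Your version is in fact slightly tidier in two places---your ``if'' direction does not tacitly assume $(ij)$ lies on a cycle of length $\ge 7$ (the paper's choice of $k,l$ along such a cycle fails for bridge edges, whereas your choice $k\in N_{\cal G}(i)\setminus\{j\}$, $l\in N_{\cal G}(j)\setminus\{i\}$ works for any non-leaf $i,j$), and you explicitly treat the leaf-endpoint subcase of the converse that the paper leaves implicit---but the underlying argument is the same cycle bookkeeping.
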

\begin{proof}
If edge $(ij)$ exists in $\cal G$, consider nodes $k$ and $l$ in cycle $k -i -j -l- r_1- r_2-r_3-..-k$ in $\cal G$ of length $7$ or more. Note that nodes $k,l$ satisfy the condition in the `if' statement for graphical model $\cal GM$ as shown in Fig.~\ref{fig:radial2}. For the converse, consider the case where $i$ and $j$ are not neighbors but two-hop neighbors in $\cal G$. Further, existence of paths $k-i-l$ and $k-j-l$ in $\cal GM$ implies that $k,l$ must be one or two-hop neighbors of both $i$ and $j$ in $\cal G$. From the minimum cycle length constraint, $i$ and $j$ has exactly one common neighbor in $\cal G$, say node $c$. Further, $k,l$ cannot both be neighbors of $i$ or $j$ or $c$ in $\cal G$ as that would make $k,l$ one hop neighbors in $\cal GM$. First, consider the configuration in $\cal G$ where $k$ is neighbor of $i$ and two hops away from $j$. This leads to a cycle $i-c-j-r_1-k-i$ of cycle $5$ and is not permissible. Similarly, node $l$ cannot be neighbor of $i$ and two-hop neighbor of $j$. Next, consider the configuration where $k= c$ is the common neighbor of $i$ and $j$, while $l$ is two hops away from both $i$ and $j$. This configuration produces a cycle $i-k-j-r_1-l-r_2-i$ of length $6$ for some $r_1\neq r_2$ and violates the cycle constraint. Finally consider the case where $k,l$ are two hops away from both $i$ and $j$. This leads to at least one cycle of type $i-c-j-r_1-(k \text{~or~} l)-r_2-i$ of length $6$. Thus no configurations are permissible thereby proving that the converse of the statement is true for grids with minimum cycle length greater than $6$.
\end{proof}
Following the argument after Theorem \ref{condind}, it is clear that Theorem \ref{condind_loopy} also holds for the `hybrid' graph for the LC-PF model where graphical model nodes for votlage magnitude and phase angle at the same bus are combined to form a `hybrid' node. The topology reconstruction steps for grids with minimum cycle length greater than $6$ are described in Algorithm $1$. The tolerance $\tau_1$ is used to determine edges in the graphical model using entries in the estimated inverse covariance matrix of voltages. Note that Algorithm $1$ does not need additional values on nodal injection covariances or line impedances and relies only on nodal voltage samples.
\begin{algorithm}
\caption{Topology Learning using Neighborhood Counting}
\textbf{Input:} Inverse covariance matrix of nodal voltages $\Sigma^{-1}_{V}$: $\Sigma^{-1}_{V} = {\Sigma^{DC}_{\theta}}^{-1}$ for DC-PF or $\Sigma^{-1}_{V} = {\Sigma^{LC}_{(v,\theta)}}^{-1}$ for LC-PF, tolerance $\tau_1 > 0$\\
\textbf{Output:} Grid $\cal G$
\begin{algorithmic}[1]
\State Construct graphical model $\cal GM$ for voltages in DC or LC-PF with edges $(ij)$ for $|\Sigma^{-1}_{V}(ij)| \geq \tau_1$
\If{data from LC-PF}
 \State Construct `hybrid' graph by combining nodes for voltage magnitude and phase at same bus in graphical model
\EndIf
\State Insert edges between non-leaf nodes in $\cal G$ using Theorem \ref{condind_loopy}.
\State Draw edges between leaf nodes and parent in $\cal G$ using Statement (b) in Theorem \ref{condind}.
\end{algorithmic}
\end{algorithm}
In the next section, we discuss another technique to determine the topological edges from the graphical model.

\section{Topology Identification using Thresholding}
\label{sec:thres}
Here we analyze the values in the inverse covariance matrix of voltages in the DC or LC-PF models to determine true edges in grid $\cal G$ from the edges in the graphical model. First we discus the case of radial grids.

\subsection{Learning Radial Grids}
Consider the DC-PF model for nodal phase angles in radial grid. The next result distinguishes between one-hop and two-hop neighbors in the grid graph.
\begin{theorem}\label{threshld_radial}
Let ${\Sigma^{DC}_{\theta}}^{-1}$ be the inverse covariance matrix of nodal phase angles in radial grid $\cal G$ under the DC-PF model with Gaussian nodal injection fluctuations. If ${\Sigma^{DC}_{\theta}}^{-1}(i,j) < 0$, then $(ij)$ is a true edge in $\cal G$
\end{theorem}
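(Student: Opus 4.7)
The plan is to read off the three cases for the off-diagonal entries of $\Sigma^{DC^{-1}}_\theta$ already computed inside the proof of Theorem \ref{structure_DC_PF}, and to observe that the sign pattern is forced once $\cal G$ is a tree. Specifically, for $i\neq j$ the formula gives $0$ whenever $i,j$ are separated by more than two hops, a strictly positive value for two-hop neighbors, and a quantity of the form (negative term)+(common-neighbor sum) when $(ij)\in{\cal E}$. The contrapositive of the claim is therefore what I would establish: if $(ij)$ is not a true edge, then ${\Sigma^{DC}_\theta}^{-1}(i,j)\geq 0$.

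First I would invoke the standard physical assumption that every line susceptance $\beta_{ij}=x_{ij}/(x_{ij}^2+r_{ij}^2)$ is strictly positive, and that $\Sigma^{DC}_p(k,k)>0$ by Assumption 1. These two facts immediately make every summand $\beta_{ik}\beta_{jk}/\Sigma^{DC}_p(k,k)$ nonnegative and strictly positive whenever $k$ is a common neighbor of $i$ and $j$ in $\cal G$. Next I would use the tree structure of a radial grid in two ways. For two-hop neighbors $i,j$ in $\cal G$, there is exactly one common neighbor (otherwise a cycle of length $4$ would exist), so the sum collapses to one strictly positive term, giving ${\Sigma^{DC}_\theta}^{-1}(i,j)>0$. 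For nodes at separation $\geq 3$ the entry is $0$ from the case analysis in Theorem \ref{structure_DC_PF}. In both cases the entry is nonnegative, so a strictly negative entry rules out both possibilities, forcing $(ij)\in{\cal E}$.

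For completeness I would also verify, although it is not logically required by the statement, that when $(ij)$ is a true edge in a tree the entry is in fact strictly negative: since trees contain no triangles, nodes $i$ and $j$ share no common neighbor, hence the common-neighbor sum in the first branch of the formula vanishes, and the remaining term $-\beta_{ij}\bigl(\beta_i/\Sigma^{DC}_p(i,i)+\beta_j/\Sigma^{DC}_p(j,j)\bigr)$ is strictly negative. This shows that thresholding by sign is not only sufficient but also picks up every operational edge, which is the reason the test recovers the full radial topology.

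The proof is essentially a bookkeeping argument once the inverse-covariance formula is in hand; the only subtlety is using the absence of $3$-cycles in a tree to eliminate the positive correction in the formula for true edges and to guarantee a single positive contribution for two-hop neighbors. I do not expect any real obstacle beyond stating the sign conventions on $\beta_{ij}$ clearly; the same bookkeeping will be needed in the loopy case (Section \ref{sec:thres} continuation) where the positive correction can contaminate the sign of the true-edge entry, which is exactly why extra impedance/geometry conditions become necessary later.
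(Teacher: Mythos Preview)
Your proposal is correct and follows essentially the same route as the paper's own proof: both invoke the off-diagonal formula for ${\Sigma^{DC}_{\theta}}^{-1}(i,j)$ from Theorem~\ref{structure_DC_PF} and use the tree structure to note that true edges have no common-neighbor contribution (hence a strictly negative entry) while two-hop neighbors yield a strictly positive entry. Your write-up is simply more explicit about the sign conventions on $\beta_{ij}$ and $\Sigma^{DC}_p(k,k)$ and about the contrapositive framing, but there is no substantive difference in strategy.
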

\begin{proof}
Observe the expression for ${\Sigma^{DC}_{\theta}}^{-1}(i,j)$ given in Theorem \ref{structure_DC_PF}. For edge $(ij)$, no $k$ exists with edges $(ik), (jk)$ as $\cal G$ is a radial grid. Thus ${\Sigma^{DC}_{\theta}}^{-1}(i,j) <0$ if $(ij)$ is a true edge. Similarly, it can be shown that ${\Sigma^{DC}_{\theta}}^{-1}(i,j)> 0$ if $i$ and $j$ are two-hop neighbors in $\cal G$.
\end{proof}
A similar result for voltage magnitudes in the LC-PF model with constant $r/x$ ratio is given in \cite{bolognani2013identification}. Our next result shows that the inverse covariance matrix of nodal voltage magnitudes and phase angles in the LC-PF model can in fact be used to estimate the radial topology without the restriction on constant $r/x$ ratio.

\begin{theorem}\label{threshld_radial_LC}
Let ${\Sigma^{LC}_{(v,\theta)}}^{-1} = \begin{bmatrix} J_{vv} &J_{v\theta}\\J_{\theta v} &J_{\theta\theta}\end{bmatrix}$ be the inverse covariance matrix of nodal voltages in radial grid $\cal G$ under the LC-PF model with Gaussian nodal injection fluctuations. If $J_{vv}(i,j) + J_{\theta\theta}(i,j) < 0$, then $(ij)$ is a true edge in $\cal G$
\end{theorem}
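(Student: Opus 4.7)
The plan is to mirror the strategy used for Theorem \ref{threshld_radial}, namely to write an explicit formula for the relevant entry of the inverse covariance matrix and read off its sign from the Laplacian sparsity pattern combined with radiality. The key algebraic simplification comes first.

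Starting from the block expressions in Theorem \ref{inverse_LC_PF}, I would expand $J_{vv}+J_{\theta\theta}$ and observe that the four cross terms involving $\Sigma^{LC}_{pq}$ cancel pairwise (one pair from $J_{vv}$ with opposite-sign partners in $J_{\theta\theta}$, and similarly for the other pair). Using the fact that $D$, $\Sigma^{LC}_{pp}$, and $\Sigma^{LC}_{qq}$ are diagonal and therefore commute, this collapses to the compact form
\begin{equation*}
J_{vv}+J_{\theta\theta} \;=\; H_g\,M\,H_g \;+\; H_\beta\,M\,H_\beta, \qquad M \coloneqq D^{-1}\bigl(\Sigma^{LC}_{pp}+\Sigma^{LC}_{qq}\bigr).
\end{equation*}
Since each $2{\times}2$ injection covariance block at a bus is positive definite, $D(i,i)>0$, and $\Sigma^{LC}_{pp}(i,i)+\Sigma^{LC}_{qq}(i,i)>0$, so $M$ is a diagonal matrix with strictly positive entries.

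Next I would compute the $(i,j)$ entry for $i\neq j$. Because $H_g$ and $H_\beta$ share the Laplacian sparsity pattern of $\cal G$ (Eq.~\eqref{Laplacian}), the sum $\sum_k H_g(i,k)M(k,k)H_g(k,j)$ is supported only on those $k$ that are either equal to $i$ or $j$ or that are common graph neighbors of both. Radiality then lets me split into three exhaustive cases: (i) $(ij)\in\cal E$, in which case the no-triangle property forbids any common neighbor, so only $k\in\{i,j\}$ contribute and the entry evaluates to $-g_{ij}\bigl(M(i,i)\,g_i+M(j,j)\,g_j\bigr)-\beta_{ij}\bigl(M(i,i)\,\beta_i+M(j,j)\,\beta_j\bigr)$, which is strictly negative (with $g_i,\beta_i$ the weighted row sums); (ii) $i,j$ are two-hop neighbors with a unique common neighbor $c$, giving $M(c,c)\bigl(g_{ic}g_{cj}+\beta_{ic}\beta_{cj}\bigr)>0$; (iii) $i,j$ are at graph distance $\geq 3$, forcing the entry to be zero by Theorem \ref{structure_LC_PF}.

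Combining the three cases shows that $J_{vv}(i,j)+J_{\theta\theta}(i,j)<0$ can occur only in case (i), proving the claim. The main thing to be careful about is the cancellation of the $\Sigma^{LC}_{pq}$ cross terms and the justification that $M$ is positive diagonal; the positivity of $M$ is what turns the radial no-triangle property into a sign statement, exactly as $1/\Sigma^{DC}_p(k,k)>0$ did in the DC case of Theorem \ref{threshld_radial}. Radiality is also essential in cases (i) and (ii) to guarantee that at most one common neighbor contributes, and that its contribution does not collide with the $k\in\{i,j\}$ terms of case (i); both follow immediately from the absence of cycles of length three.
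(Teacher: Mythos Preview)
Your proof is correct and follows essentially the same route as the paper: simplify $J_{vv}+J_{\theta\theta}$ to $H_gMH_g+H_\beta MH_\beta$ with $M=D^{-1}(\Sigma^{LC}_{pp}+\Sigma^{LC}_{qq})$ positive diagonal, and then invoke the same Laplacian sign analysis as in Theorem~\ref{threshld_radial}. The paper's proof merely states the compact form and says ``a similar analysis as Theorem~\ref{threshld_radial} proves the statement,'' whereas you spell out the three distance cases explicitly, but the argument is the same.
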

\begin{proof}
From expressions for ${\Sigma^{LC}_{(v,\theta)}}^{-1}, D$ in Theorem \ref{inverse_LC_PF},
\squeezeup
\begin{align}
 J_{vv} +J_{\theta\theta} &=~\text{\small $H_{g}D^{-1}(\Sigma^{LC}_{qq}+\Sigma^{LC}_{pp})H_{g}$}\nonumber\\
 &~~\text{\small$+H_{\beta}D^{-1}(\Sigma^{LC}_{qq}+\Sigma^{LC}_{pp})H_{\beta}$}
\end{align}
As $D^{-1}(\Sigma^{LC}_{qq}+\Sigma^{LC}_{pp})$ is a diagonal matrix with positive entries, a similar analysis as Theorem \ref{threshld_radial} proves the statement.
\end{proof}
Theorems \ref{threshld_radial} and \ref{threshld_radial_LC} thus provide a simple \textbf{thresholding based} scheme to identify the true edges in radial grid under both DC-PF and LC-PF. Next we show that these results extend to loopy grids without cycles of length $3$.
\subsection{Learning Loopy Grids without Triangles}
A triangle is a sub-graph with three nodes $i,j,k$ and edges $(ij),(ik),(jk)$. In other words, a triangle represents a cycle of length $3$. For neighboring buses $i,j$ in loopy grid $\cal G$ without triangles, there is no $k$ such that $(ki),(kj)$ are edges. Thus the following corollary holds:
\begin{corollary}\label{threshld_triangle}
For loopy grid $\cal G$ with cycle lengths greater than $3$ with Gaussian nodal injection fluctuations, the result in Theorem \ref{threshld_radial} and Theorem \ref{threshld_radial_LC} hold under the DC-PF and LC-PF models respectively.
\end{corollary}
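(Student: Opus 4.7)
The plan is to observe that the radiality hypothesis in Theorems \ref{threshld_radial} and \ref{threshld_radial_LC} was used only to rule out the existence of a common neighbor between adjacent nodes, i.e., to exclude triangles, so the arguments carry over verbatim once that triangle-freeness is assumed directly.

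First I would revisit the closed-form expression for ${\Sigma^{DC}_{\theta}}^{-1}(i,j)$ from Theorem \ref{structure_DC_PF}. For any edge $(ij)\in{\cal E}$, the off-diagonal entry is
\[
-\beta_{ij}\!\left(\tfrac{\beta_i}{\Sigma^{DC}_p(i,i)}+\tfrac{\beta_j}{\Sigma^{DC}_p(j,j)}\right)+\sum_{k\neq i,j}\frac{\beta_{ik}\beta_{jk}}{\Sigma^{DC}_p(k,k)}.
\]
A term in the sum is nonzero only when $k$ is simultaneously a neighbor of $i$ and of $j$, i.e., when $i,j,k$ form a triangle in ${\cal G}$. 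Under the assumption that the minimum cycle length exceeds $3$, no such $k$ exists, so the sum vanishes and the remaining piece is strictly negative (all susceptances and injection variances are positive). Conversely, for two-hop neighbors $i,j$ the first piece is absent and, even if several common neighbors exist (which is possible in a loopy grid), every summand $\beta_{ik}\beta_{jk}/\Sigma^{DC}_p(k,k)$ is strictly positive, so the entry is strictly positive. This yields the DC-PF half of the corollary.

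For the LC-PF half I would reuse the identity derived in the proof of Theorem \ref{threshld_radial_LC},
\[
J_{vv}+J_{\theta\theta}=H_{g}D^{-1}(\Sigma^{LC}_{qq}+\Sigma^{LC}_{pp})H_{g}+H_{\beta}D^{-1}(\Sigma^{LC}_{qq}+\Sigma^{LC}_{pp})H_{\beta},
\]
together with the fact that $D^{-1}(\Sigma^{LC}_{qq}+\Sigma^{LC}_{pp})$ is a diagonal matrix with strictly positive entries (each diagonal entry equals $(\Sigma^{LC}_{pp}(k,k)+\Sigma^{LC}_{qq}(k,k))/D(k,k)>0$ by Assumption 1). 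Expanding the matrix products in analogy with the DC-PF calculation, the $(i,j)$ entry of $J_{vv}+J_{\theta\theta}$ again decomposes into a piece proportional to $-(g_{ij}^2+\beta_{ij}^2)$ times positive weights on the diagonal plus a cross sum $\sum_k (g_{ik}g_{jk}+\beta_{ik}\beta_{jk})\cdot(\text{positive})$ indexed by common neighbors $k$. The triangle-free assumption kills this cross sum for any true edge $(ij)$, leaving a strictly negative quantity, while for two-hop neighbors the diagonal piece is absent and each remaining summand is non-negative.

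There is really no main obstacle: once one isolates the sole role of radiality (absence of a common neighbor of any two adjacent nodes), the corollary is immediate. The only care needed is to note that the argument tolerates multiple common neighbors for two-hop pairs because every contribution to the sum has the same (positive) sign, so cancellation cannot occur.
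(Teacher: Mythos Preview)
Your proposal is correct and follows essentially the same approach as the paper: the paper's justification for the corollary is simply the observation (stated in the sentence immediately preceding it) that triangle-freeness means no common neighbor $k$ exists for adjacent $i,j$, so the cross-sum in the expression from Theorem~\ref{structure_DC_PF} vanishes. You have merely written out this one-line argument in greater detail, including the LC-PF case and the harmless observation that two-hop pairs may still have multiple common neighbors without affecting the sign.
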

The steps in topology learning for grids without triangles are listed in Algorithm $2$.
\begin{algorithm}
\caption{Topology Learning using Thresholding}
\textbf{Input:} Inverse covariance matrix of nodal voltages, ${\Sigma^{DC}_{\theta}}^{-1}$ for DC-PF or $\Sigma^{-1}_{V} =$ \text{\small$\begin{bmatrix} J_{vv} &J_{v\theta}\\J_{\theta v}&J_{\theta\theta}\end{bmatrix}$} for LC-PF, tolerance $\tau_2 < 0$\\
\textbf{Output:} Grid $\cal G$
\begin{algorithmic}[1]
\ForAll{buses $i,j \in {\cal G}$}
 \State Insert $(ij)$ in $\cal G$ if ${\Sigma^{DC}_{\theta}}^{-1}(i,j) \leq \tau_2$ for DC-PF or $J_{vv}(i,j)+J_{\theta\theta}(i,j)\leq \tau_2$ for LC-PF.
\EndFor
\end{algorithmic}
\end{algorithm}

The tolerance $\tau_2 < 0$ is selected to account for the estimation error in the inverse covariance matrix due to finite number of samples. For exact inverse covariance matrices, the tolerance can be kept at $0$. The estimation steps for the inverse covariance matrix in described in the previous section. Finally we discuss conditions under which Algorithm $2$ is able to learn the topology for grids with triangles.

\subsection{Topology Learning in Loopy Grids with Triangles}
Consider a loopy grid $\cal G$ with multiple triangles (cycles of length $3$). We are interested in understanding the use of Algorithm $2$ in identifying the true topology from the graphical model $\cal GM$. We consider the DC-PF with inverse covariance matrix of phase angles, ${\Sigma^{DC}_{\theta}}^{-1}$. We first analyze the case where an edge in the graph is part of only one triangle (see Fig.~\ref{fig:triangle1}). The next theorem states a sufficient condition under which Algorithm $2$ is guaranteed to recover that edge.
\begin{figure}[bt]
\centering\hfill
\subfigure[]{\includegraphics[width=0.20\textwidth]{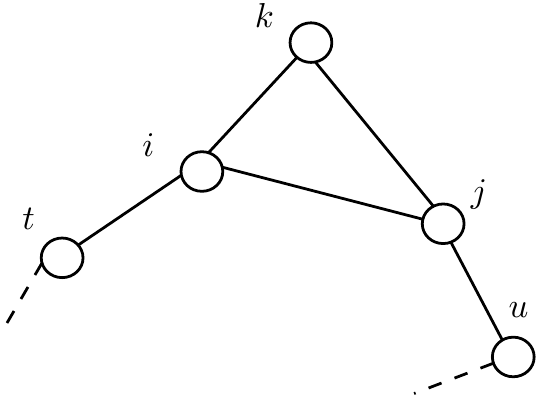}\label{fig:triangle1}}\hfill
\subfigure[]{\includegraphics[width=0.20\textwidth]{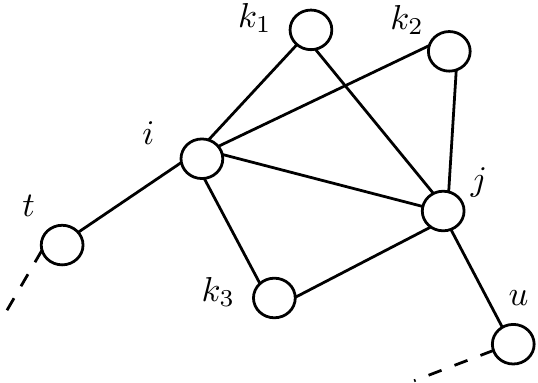}\label{fig:triangle2}}\hfill
%\subfigure[]{\includegraphics[width=0.19\textwidth, height =.18\textwidth]{junctiontree3.pdf}\label{fig:junction3}}
\vspace{-.25cm}
\caption{Edge $(ij)$ in loopy grid with (a) one triangle formed with node $k$ (b) multiple triangles formed with set ${\cal K} = \{k_1, k_2, k_3\}$}
\vspace{-3mm}
\end{figure}
\begin{theorem}\label{threshld_loopy}
Let edge $(ij)$ be part of only one triangular sub-graph on nodes $i,j, k$ with edges $(ij), (jk), (ik)$. ${\Sigma^{DC}_{\theta}}^{-1}(i,j) < 0$ in the DC-PF if the following holds:
\begin{align}
&\text{\footnotesize
$\beta_{ij} > -\frac{\Sigma^{DC}_p(j,j)\beta_{ik}+ \Sigma^{DC}_p(i,i)\beta_{jk}}{2(\Sigma^{DC}_p(i,i)+\Sigma^{DC}_p(j,j))} +$}\nonumber\\
&\sqrt{\begin{aligned}&\text{\footnotesize
$\left(\frac{\Sigma^{DC}_{p}(j,j)\beta_{ik}+ \Sigma^{DC}_p(i,i)\beta_{jk}}{2(\Sigma^{DC}_p(i,i)+\Sigma^{DC}_p(j,j))}\right)^2$}\\ &\text{\footnotesize
$+\frac{\Sigma^{DC}_p(i,i)\Sigma^{DC}_p(j,j)\beta_{ik}\beta_{jk}}{\Sigma^{DC}_p(k,k)(\Sigma^{DC}_p(i,i)+\Sigma^{DC}_p(j,j))}$}\end{aligned}}
\end{align}
\end{theorem}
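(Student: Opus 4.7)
The plan is to start from the closed-form expression for ${\Sigma^{DC}_{\theta}}^{-1}(i,j)$ from Theorem~\ref{structure_DC_PF}, specialize it to the triangle geometry, and then exploit a monotonicity observation to reduce the negativity condition to a single quadratic inequality in $\beta_{ij}$. Since edge $(ij)$ belongs to exactly one triangle, node $k$ is the unique common neighbor of $i$ and $j$ in $\mathcal{G}$, so the cross-sum $\sum_{l\neq i,j}\beta_{il}\beta_{jl}/\Sigma^{DC}_p(l,l)$ collapses to the single term $\beta_{ik}\beta_{jk}/\Sigma^{DC}_p(k,k)$. This yields
\[
{\Sigma^{DC}_{\theta}}^{-1}(i,j) = -\beta_{ij}\!\left(\tfrac{\beta_i}{\Sigma^{DC}_p(i,i)}+\tfrac{\beta_j}{\Sigma^{DC}_p(j,j)}\right) + \tfrac{\beta_{ik}\beta_{jk}}{\Sigma^{DC}_p(k,k)}.
\]

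Next, I would split the weighted degrees as $\beta_i=\beta_{ij}+\tilde\beta_i$ and $\beta_j=\beta_{ij}+\tilde\beta_j$, where $\tilde\beta_i=\sum_{l\neq j}\beta_{il}$ collects the contributions from all neighbors of $i$ other than $j$, and analogously for $\tilde\beta_j$. Because $k$ is a neighbor of both $i$ and $j$ (distinct from $j$ and $i$, respectively), we have the key monotonicity bounds $\tilde\beta_i \geq \beta_{ik}$ and $\tilde\beta_j \geq \beta_{jk}$. Substituting and using these bounds, a sufficient condition for ${\Sigma^{DC}_{\theta}}^{-1}(i,j)<0$ is the quadratic inequality
\[
\Bigl(\tfrac{1}{\Sigma^{DC}_p(i,i)}+\tfrac{1}{\Sigma^{DC}_p(j,j)}\Bigr)\beta_{ij}^{2} + \Bigl(\tfrac{\beta_{ik}}{\Sigma^{DC}_p(i,i)}+\tfrac{\beta_{jk}}{\Sigma^{DC}_p(j,j)}\Bigr)\beta_{ij} - \tfrac{\beta_{ik}\beta_{jk}}{\Sigma^{DC}_p(k,k)} > 0.
\]

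Finally, I would apply the quadratic formula. Writing this as $A\beta_{ij}^{2}+B\beta_{ij}-C>0$ with $A,B,C>0$, its larger root is $\beta_{ij}^{\star}=-B/(2A)+\sqrt{B^{2}/(4A^{2})+C/A}$. Computing $B/(2A)=\bigl[\Sigma^{DC}_p(j,j)\beta_{ik}+\Sigma^{DC}_p(i,i)\beta_{jk}\bigr]/\bigl[2(\Sigma^{DC}_p(i,i)+\Sigma^{DC}_p(j,j))\bigr]$ and $C/A=\Sigma^{DC}_p(i,i)\Sigma^{DC}_p(j,j)\beta_{ik}\beta_{jk}/\bigl[\Sigma^{DC}_p(k,k)(\Sigma^{DC}_p(i,i)+\Sigma^{DC}_p(j,j))\bigr]$ reproduces exactly the right-hand side in the theorem statement, closing the argument.

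The one subtlety, and the place a reader is most likely to question the proof, is the transition from the exact expression involving the full degrees $\beta_i,\beta_j$ to a bound that depends only on the triangle susceptances $\beta_{ij},\beta_{ik},\beta_{jk}$. The justification is that the quadratic in $\beta_{ij}$ has a negative coefficient on the linear term $-\beta_{ij}(\tilde\beta_i/\Sigma^{DC}_p(i,i)+\tilde\beta_j/\Sigma^{DC}_p(j,j))$ appearing in ${\Sigma^{DC}_{\theta}}^{-1}(i,j)$, so replacing $\tilde\beta_i,\tilde\beta_j$ by the smaller quantities $\beta_{ik},\beta_{jk}$ makes the expression larger, hence a $\beta_{ij}$ that drives the reduced quadratic negative also drives the original expression negative. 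This monotonicity step is why the stated condition is only sufficient and not necessary, and it is worth flagging explicitly in the write-up.
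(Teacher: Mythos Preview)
Your proposal is correct and follows essentially the same approach as the paper: start from the expression in Theorem~\ref{structure_DC_PF}, reduce the cross-sum to the single term $\beta_{ik}\beta_{jk}/\Sigma^{DC}_p(k,k)$, lower-bound $\beta_i,\beta_j$ by $\beta_{ij}+\beta_{ik}$ and $\beta_{ij}+\beta_{jk}$ respectively, and solve the resulting quadratic in $\beta_{ij}$. The paper compresses the monotonicity step into one line, whereas you spell out explicitly why replacing $\tilde\beta_i,\tilde\beta_j$ by $\beta_{ik},\beta_{jk}$ yields a valid sufficient condition; otherwise the arguments coincide.
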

\begin{proof} From Theorem \ref{structure_DC_PF}, ${\Sigma^{DC}_{\theta}}^{-1}(i,j) < 0$ if
\begin{align}
 \frac{\beta_{ik}\beta_{jk}}{\Sigma^{DC}_p(k,k)} < \beta_{ij}(\frac{\beta_{ij}+\beta_{ik}}{\Sigma^{DC}_p(i,i)}+\frac{\beta_{ij}+\beta_{jk}}{\Sigma^{DC}_p(j,j)})\nonumber
\end{align}
Using conditions for positivity of quadratic functions, the statement follows.
\end{proof}
Using similar techniques for edges that are part of multiple triangles (see Fig.~\ref{fig:triangle2}), we have the following sufficiency result.
\begin{theorem}\label{threshld_loopy1}
Consider edge $(ij)$ in graph $\cal G$. Let $\cal K$ be the set of nodes that are neighbors of both $i$ and $j$. ${\Sigma^{DC}_{\theta}}^{-1}(i,j) < 0$ for the DC-PF if the following holds:
\begin{align}
&\text{\footnotesize
$\beta_{ij} >-\frac{\Sigma^{DC}_p(j,j)(\beta_i-\beta_{ij})+ \Sigma^{DC}_p(i,i)(\beta_j-\beta_{ij})}{2({\Sigma^{DC}_p(i,i)+\Sigma^{DC}_p(j,j)})}+$}\nonumber\\
&\sqrt{\begin{aligned}&\text{\footnotesize
$\left(\frac{\Sigma^{DC}_p(j,j)(\beta_i-\beta_{ij})+ \Sigma^{DC}_p(i,i)(\beta_j-\beta_{ij})}{2({\Sigma^{DC}_p(i,i)+\Sigma^{DC}_p(j,j)})}\right)^2$}\\
&\text{\footnotesize
$+\frac{\Sigma^{DC}_p(i,i)\Sigma^{DC}_p(j,j)}{\Sigma^{DC}_p(i,i)+\Sigma^{DC}_p(j,j)}\smashoperator[r]{\sum_{k \in {\cal K}}}\frac{\beta_{ik}\beta_{jk}}{\Sigma^{DC}_p(k,k)}$}\end{aligned}}
\end{align}
where $\beta_i = \smashoperator[r]{\sum_{k:(ik) \in {\cal E}}}\beta_{ik}$.
\end{theorem}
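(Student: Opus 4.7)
The plan is to start from the closed-form inverse covariance expression derived in Theorem~\ref{structure_DC_PF} and convert the sign condition ${\Sigma^{DC}_{\theta}}^{-1}(i,j) < 0$ into a scalar quadratic inequality in the unknown $\beta_{ij}$, exactly as in the proof of Theorem~\ref{threshld_loopy}, but with the single-triangle term $\beta_{ik}\beta_{jk}/\Sigma^{DC}_p(k,k)$ replaced by a sum ranging over all common neighbors $k \in \mathcal{K}$.

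First I would rewrite the condition ${\Sigma^{DC}_{\theta}}^{-1}(i,j) < 0$ (writing $\sigma_m$ for $\Sigma^{DC}_p(m,m)$) as
\[
\beta_{ij}\!\left(\tfrac{\beta_i}{\sigma_i}+\tfrac{\beta_j}{\sigma_j}\right) \;>\; \sum_{k\in\mathcal{K}}\tfrac{\beta_{ik}\beta_{jk}}{\sigma_k},
\]
observing that the sum on the right automatically restricts to $\mathcal{K}$ because $\beta_{ik}\beta_{jk}$ vanishes unless both $(ik)$ and $(jk)$ are edges. Next I would isolate the $\beta_{ij}$-dependence on the left by splitting $\beta_i = \beta_{ij} + (\beta_i - \beta_{ij})$ and $\beta_j = \beta_{ij} + (\beta_j - \beta_{ij})$; the residuals $\beta_i - \beta_{ij}$ and $\beta_j - \beta_{ij}$ are sums of susceptances of the \emph{other} edges incident to $i$ and $j$ and therefore do not depend on $\beta_{ij}$.

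Multiplying through by $\sigma_i \sigma_j/(\sigma_i + \sigma_j)$ then rearranges the condition to a standard monic quadratic $\beta_{ij}^2 + p\,\beta_{ij} - q > 0$ with
\[
p \;=\; \frac{\sigma_j(\beta_i-\beta_{ij}) + \sigma_i(\beta_j-\beta_{ij})}{\sigma_i+\sigma_j}, \qquad q \;=\; \frac{\sigma_i\sigma_j}{\sigma_i+\sigma_j}\sum_{k\in\mathcal{K}}\frac{\beta_{ik}\beta_{jk}}{\sigma_k}.
\]
Because all susceptances and injection variances are positive, $q>0$, the discriminant is strictly positive, and the quadratic has exactly one positive root $\beta^*_{ij} = -p/2 + \sqrt{(p/2)^2 + q}$. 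Requiring $\beta_{ij} > \beta^*_{ij}$ yields precisely the inequality stated in the theorem.

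There is no real obstacle beyond bookkeeping: the key observation is that $\beta_i-\beta_{ij}$ and $\beta_j-\beta_{ij}$ should be treated as constants with respect to the quadratic in $\beta_{ij}$, and that the off-diagonal expansion over $k$ in Theorem~\ref{structure_DC_PF} collapses onto $\mathcal{K}$. The argument is a direct generalization of Theorem~\ref{threshld_loopy}, obtained by replacing the singleton $\{k\}$ with the common-neighbor set $\mathcal{K}$ and carrying the sum through the positivity-of-quadratic step unchanged.
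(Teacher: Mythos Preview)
Your proposal is correct and mirrors exactly what the paper does: it states Theorem~\ref{threshld_loopy1} without a separate proof, noting only that it follows ``using similar techniques'' to Theorem~\ref{threshld_loopy}, i.e., by writing ${\Sigma^{DC}_{\theta}}^{-1}(i,j)<0$ as a quadratic in $\beta_{ij}$ (with the single-triangle term replaced by the sum over $\mathcal K$) and invoking positivity of the quadratic. Your derivation fills in precisely this omitted bookkeeping.
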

Under Theorem \ref{threshld_loopy1}, all edges in a general loopy graph with multiple triangles can be learned using Algorithm $2$ from the inverse covariance matrix of phase angles in the DC-PF model. Note that this is only a sufficient condition and the grid topology may be learned even if it is violated. Similar relations can also be derived for the LC-PF model, but are omitted for space constraints.

We now consider two interesting cases for the DC-PF model that highlight the rationale behind the previous two theorems. First, we consider the case where covariances of injection fluctuations are equal at all nodes. This may be reasonable in a small distribution grid with comparable nodal injections such that their fluctuations are similar. In this case the following condition ensures correct edge identification using Algorithm $2$.

\begin{theorem}\label{threshld_loopy2}
Consider edge $(ij)$ in graph $\cal G$ where injection covariances at all nodes are equal. Let $\cal K$ with cardinality $|{\cal K}|$ be the set of nodes that are neighbors of both $i$ and $j$. ${\Sigma^{DC}_{\theta}}^{-1}(i,j) < 0$ if the following holds:
\begin{align}
\beta_{ij} > \frac{\max_{k \in {\cal K}, r\in\{i,j\}}\beta_{kr}}{1+\sqrt{1+2/|{\cal K}|}}
\end{align}
\end{theorem}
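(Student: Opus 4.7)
The plan is to derive this statement as a direct consequence of Theorem \ref{threshld_loopy1} by specializing to uniform injection covariances and then relaxing the resulting bound into a form that depends only on $M := \max_{k\in{\cal K}, r\in\{i,j\}}\beta_{kr}$ and $|{\cal K}|$. First, I would substitute $\Sigma^{DC}_p(k,k)=\sigma^2$ (a common value) for every $k$ into the inequality of Theorem \ref{threshld_loopy1}. All factors of $\sigma^2$ cancel, and the sufficient condition for ${\Sigma^{DC}_{\theta}}^{-1}(i,j)<0$ collapses to
\begin{align*}
\beta_{ij} \;>\; -\frac{B}{4} + \sqrt{\Bigl(\frac{B}{4}\Bigr)^{2} + \frac{P}{2}},
\end{align*}
where I abbreviate $B := \beta_i+\beta_j-2\beta_{ij}$ and $P := \sum_{k\in{\cal K}}\beta_{ik}\beta_{jk}$ (the sum is effectively over ${\cal K}$ only, since any non-common neighbor of $i$ and $j$ contributes a vanishing product).

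Next, I would rewrite the right-hand side as $(-B+\sqrt{B^{2}+8P})/4$ and observe by direct differentiation that it is strictly decreasing in $B$ and strictly increasing in $P$. To obtain the strongest sufficient condition depending only on ${\cal K}$, I would then replace $B$ by its lower bound $S := \sum_{k\in{\cal K}}(\beta_{ik}+\beta_{jk})$ (attained when $i$ and $j$ have no neighbors outside ${\cal K}\cup\{i,j\}$) and upper bound $P$ through the termwise inequality $\beta_{ik}\beta_{jk}\leq M\min(\beta_{ik},\beta_{jk})\leq \frac{M}{2}(\beta_{ik}+\beta_{jk})$. Summing gives $P \leq MS/2$, so the right-hand side is at most $(-S+\sqrt{S^{2}+4MS})/4$, a one-variable function that a quick derivative check shows to be increasing in $S$.

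Finally, since $\beta_{ik},\beta_{jk}\leq M$ implies $S\leq 2M|{\cal K}|$, substituting the worst case yields the bound $\frac{M|{\cal K}|}{2}\bigl(\sqrt{1+2/|{\cal K}|}-1\bigr)$, which after multiplication by the conjugate equals exactly $M/(1+\sqrt{1+2/|{\cal K}|})$. Thus any $\beta_{ij}$ exceeding this value also satisfies the specialized Theorem \ref{threshld_loopy1} condition, proving the claim. The main technical obstacle is ensuring that the three successive relaxations (replacing $B$ by $S$, applying $\beta_{ik}\beta_{jk}\leq \frac{M}{2}(\beta_{ik}+\beta_{jk})$, and pushing $S$ up to $2M|{\cal K}|$) are simultaneously tight; I would confirm this by checking that equality holds throughout at the symmetric configuration $\beta_{ik}=\beta_{jk}=M$ for every $k\in{\cal K}$, with $i$ and $j$ having no additional neighbors. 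This tightness witness shows that the constant $1+\sqrt{1+2/|{\cal K}|}$ in the denominator is the best attainable by this chain of bounds, so no further algebraic cleanup is possible without additional structural assumptions.
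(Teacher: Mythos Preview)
Your argument is correct and reaches the same threshold as the paper, but by a different algebraic route. After specializing Theorem~\ref{threshld_loopy1} to equal covariances, the paper does not keep the square-root form; it squares out immediately to the equivalent inequality $\beta_{ij}(\beta_i+\beta_j)>\sum_{k\in{\cal K}}\beta_{ik}\beta_{jk}$, rewrites it as
\[
(2+|{\cal K}|)\beta_{ij}^{2}+\beta_{ij}\Bigl(\textstyle\sum_{k\notin{\cal K},(ik)\in{\cal E}}\beta_{ik}+\sum_{k\notin{\cal K},(jk)\in{\cal E}}\beta_{jk}\Bigr)>\sum_{k\in{\cal K}}(\beta_{ik}-\beta_{ij})(\beta_{jk}-\beta_{ij}),
\]
drops the nonnegative middle term, bounds the right side by $|{\cal K}|(M-\beta_{ij})^{2}$, and solves the resulting quadratic in $\beta_{ij}$. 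You instead retain the radical $(-B+\sqrt{B^{2}+8P})/4$ and perform three monotonicity relaxations ($B\downarrow S$, then $P\le MS/2$, then $S\uparrow 2M|{\cal K}|$). The two chains are in fact equivalent: your three relaxations combined amount exactly to the paper's single step of dropping the non-${\cal K}$ contributions and bounding $\sum_{k}(\beta_{ik}-\beta_{ij})(\beta_{jk}-\beta_{ij})\le|{\cal K}|(M-\beta_{ij})^{2}$, as one checks by squaring your final inequality back to $2\beta_{ij}^{2}+2M|{\cal K}|\beta_{ij}>M^{2}|{\cal K}|$. The paper's path is shorter; your path has the advantage of making each relaxation explicit, verifying tightness at the symmetric configuration $\beta_{ik}=\beta_{jk}=M$, and avoiding the implicit case split on the sign of $M-\beta_{ij}$ that the paper's bound $(\beta_{ik}-\beta_{ij})(\beta_{jk}-\beta_{ij})\le(M-\beta_{ij})^{2}$ requires.
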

\begin{proof}
Simplifying the inequality condition in Theorem \ref{threshld_loopy1} under equal injection covariances leads to
\begin{align}
&\beta_{ij}(\beta_{i}+\beta_{j}) > \sum_{k \in {\cal K}}\beta_{ik}\beta_{jk}\nonumber\\
\Rightarrow~& (2+|{\cal K}|)\beta^2_{ij} + \beta_{ij}(\smashoperator[lr]{\sum_{{\small\begin{aligned} k\not\in{\cal K}\\(ik)\in{\cal E}\end{aligned}}}}\beta_{ik}+\smashoperator[lr]{\sum_{{\small\begin{aligned} k\not\in{\cal K}\\(jk)\in{\cal E}\end{aligned}}}}\beta_{jk}) > \sum_{k \in {\cal K}}\begin{aligned}(\beta_{ik} -\beta_{ij})\\(\beta_{jk}-\beta_{ij})\end{aligned} \nonumber
\end{align}
This holds true when $(2+|{\cal K}|)\beta^2_{ij}> |{\cal K}|\smashoperator[r]{\max_{k \in {\cal K}, r\in\{i,j\}}}(\beta_{kr}-\beta_{ij})^2$ which leads to the statement in the theorem.
\end{proof}
We also consider the case where the susceptance per unit length is the same on all lines. This may be true for meshed urban grids where the lines are built using same material. In that setting, we have the following corollary to Theorem \ref{threshld_loopy2} that ensures correct topology learning by Algorithm $2$.
\begin{corollary}\label{threshld_distance}
Consider graph $\cal G$ with equal injection covariances at all nodes and constant susceptance per unit length on all lines. Let $l_{ij}$ be the length of edge $(ij)$ and $\cal K$ be the set of nodes that are neighbors of both $i$ and $j$. ${\Sigma^{DC}_{\theta}}^{-1}(i,j) < 0$ for the DC-PF if the following holds:
\begin{align}
l_{ij} > \frac{\max_{k \in {\cal K}, r \in \{i,j\}}l_{kr}}{1+\sqrt{1+2/|{\cal K}|}}
\end{align}
\end{corollary}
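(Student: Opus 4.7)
The plan is to derive this corollary as a direct consequence of Theorem \ref{threshld_loopy2} by converting its susceptance-based sufficient condition into a purely geometric one. Since all injection covariances are equal, Theorem \ref{threshld_loopy2} already gives that ${\Sigma^{DC}_{\theta}}^{-1}(i,j) < 0$ whenever
\begin{align*}
\beta_{ij} > \frac{\max_{k \in {\cal K},\, r\in\{i,j\}} \beta_{kr}}{1+\sqrt{1+2/|{\cal K}|}},
\end{align*}
so the only remaining task is to rewrite this inequality in terms of line lengths under the added geometric assumption.

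First I would invoke the constant susceptance per unit length hypothesis to write $\beta_e = \gamma\, l_e$ for every line $e$, where $\gamma > 0$ is the common per-length susceptance. Substituting $\beta_{ij} = \gamma\, l_{ij}$ and $\beta_{kr} = \gamma\, l_{kr}$ into the inequality above and canceling the positive factor $\gamma$ from both sides yields
\begin{align*}
l_{ij} > \frac{\max_{k \in {\cal K},\, r\in\{i,j\}} l_{kr}}{1+\sqrt{1+2/|{\cal K}|}},
\end{align*}
which is exactly the condition stated in the corollary. Applying Theorem \ref{threshld_loopy2} then delivers ${\Sigma^{DC}_{\theta}}^{-1}(i,j) < 0$ and completes the argument.

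No step here is technically delicate; the corollary is a rescaling of Theorem \ref{threshld_loopy2} in which $\gamma$ cancels out because the sufficient condition is homogeneous of degree one in the edge susceptances. The main conceptual point worth emphasizing is the resulting geometric interpretation: the edge $(ij)$ is guaranteed to be recovered by the thresholding scheme provided it is sufficiently long compared to the other edges of the triangles in which it participates, with the precise tolerance controlled by the number $|{\cal K}|$ of common neighbors of $i$ and $j$.
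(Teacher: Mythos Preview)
Your proposal is correct and matches the paper's own argument essentially verbatim: the paper simply substitutes $\beta_{ij} = l_{ij}\beta_u$ (your $\gamma$) into Theorem~\ref{threshld_loopy2} and cancels the common positive factor. There is nothing to add.
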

This result follows immediately by using $\beta_{ij} = l_{ij}\beta_u$ where $\beta_u$ is the constant susceptance per unit length on all lines. Note that Corollary \ref{threshld_distance} is a sufficiency condition based on \textbf{the geometry of the grid}. For example, for $\cal K$ of cardinality $1$ and $2$, the right-side in the inequality becomes $\frac{\max_{r \in \{i,j\}}l_{ik}}{1+\sqrt{3}}$ and $\frac{\max_{k \in {\cal K}, r\in\{i,j\}}l_{ik}}{1+\sqrt{2}}$ respectively. The lower limit on $l_{ij}$ thus increases with an increase in the cardinality of ${\cal K}$. Further, it signifies that if line lengths are more equitable, Algorithm $2$ can guarantee recovery in the presence of greater number of triangles in the grid.

Finally for completion, we also consider simultaneous parameter and topology learning in grid $\cal G$ when the observer has access to the covariance of nodal injections in the grid nodes along with the empirical covariance matrix of voltages, created using measurement samples. The covariance of injections may be learned from historical data or other off-line methods.

\begin{theorem}
Consider grid $\cal G$ with covariance matrix of phase angle $\Sigma^{DC}_{\theta}$ and known diagonal injection covariance matrix $\Sigma^{DC}_p$ in the DC-PF model. The edges in the grid are given by the non-zero off-diagonal terms in ${\Sigma^{DC}_p}^{1/2}\sqrt{{\Sigma^{DC}_p}^{-1/2}{\Sigma^{DC}_{\theta}}^{-1}{\Sigma^{DC}_p}^{-1/2}}{\Sigma^{DC}_p}^{1/2}$.
\end{theorem}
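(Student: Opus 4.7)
The approach is to recognize that the DC-PF identity $\Sigma^{DC}_{\theta} = H_{\beta}^{-1}\Sigma^{DC}_p H_{\beta}^{-1}$ from Eq.~(\ref{covar_theta}) inverts to ${\Sigma^{DC}_{\theta}}^{-1} = H_{\beta}{\Sigma^{DC}_p}^{-1}H_{\beta}$, so the edge set $\cal E$ is already encoded in $H_{\beta}$, whose off-diagonal entry at $(i,j)$ equals $-\beta_{ij}$ when $(ij)\in{\cal E}$ and zero otherwise by Eq.~(\ref{Laplacian}). The claimed formula should therefore be read as an explicit deconvolution that strips the outer ${\Sigma^{DC}_p}$ factors to recover $H_{\beta}$ itself.

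First I would substitute the inverted identity into the inner symmetric factor to obtain
\begin{equation*}
{\Sigma^{DC}_p}^{-1/2}{\Sigma^{DC}_{\theta}}^{-1}{\Sigma^{DC}_p}^{-1/2} = B^{2}, \qquad B := {\Sigma^{DC}_p}^{-1/2}H_{\beta}{\Sigma^{DC}_p}^{-1/2}.
\end{equation*}
Second I would argue that $B$ is symmetric positive definite, which certifies that the principal matrix square root recovers $B$ itself, i.e., $\sqrt{B^{2}}=B$. Third I would conjugate by ${\Sigma^{DC}_p}^{1/2}$ on both sides, giving
\begin{equation*}
{\Sigma^{DC}_p}^{1/2}\sqrt{{\Sigma^{DC}_p}^{-1/2}{\Sigma^{DC}_{\theta}}^{-1}{\Sigma^{DC}_p}^{-1/2}}\,{\Sigma^{DC}_p}^{1/2} = {\Sigma^{DC}_p}^{1/2}B\,{\Sigma^{DC}_p}^{1/2} = H_{\beta},
\end{equation*}
after which the Laplacian structure of Eq.~(\ref{Laplacian}) finishes the proof: the non-zero off-diagonals of the right-hand side are precisely the edges of $\cal G$.

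The main obstacle is the second step, namely certifying that the principal (PSD) square root is the correct branch, since in general a symmetric matrix admits many symmetric square roots. To handle this I would note that the reduced weighted Laplacian $H_{\beta}$ is symmetric positive definite (removing the row and column of the reference bus kills the lone zero eigenvalue of the full Laplacian, whose eigenvector is the all-ones vector), while ${\Sigma^{DC}_p}^{-1/2}$ is diagonal with strictly positive entries under Assumption~$1$. The map $X \mapsto {\Sigma^{DC}_p}^{-1/2}X{\Sigma^{DC}_p}^{-1/2}$ is then a congruence transformation and preserves inertia, so $B$ inherits positive definiteness from $H_{\beta}$ and is therefore the unique PSD square root of $B^{2}$. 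Once this branch selection is justified, the remainder of the derivation is a short algebraic telescoping.
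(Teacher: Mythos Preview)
Your proposal is correct and follows essentially the same route as the paper: invert Eq.~(\ref{covar_theta}), recognize ${\Sigma^{DC}_p}^{-1/2}{\Sigma^{DC}_{\theta}}^{-1}{\Sigma^{DC}_p}^{-1/2}$ as the square of $B={\Sigma^{DC}_p}^{-1/2}H_{\beta}{\Sigma^{DC}_p}^{-1/2}$, take the principal square root, and conjugate back to recover $H_{\beta}$. Your treatment is in fact slightly more careful than the paper's, since you explicitly justify via a congruence argument why $B$ is the unique positive-definite square root, whereas the paper simply asserts uniqueness.
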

\begin{proof}
Note that ${\Sigma^{DC}_p}^{-1/2}{\Sigma^{DC}_{\theta}}^{-1}{\Sigma^{DC}_p}^{-1/2}$ is a positive definite matrix and has a unique square root ${\Sigma^{DC}_p}^{-1/2}H_{\beta}{\Sigma^{DC}_p}^{-1/2}$. Multiplying both side of the square root by ${\Sigma^{DC}_p}^{1/2}$ gives the reduced weighted Laplacian matrix, where the non-zero off-diagonal terms correspond to grid edges and associated susceptance values.
\end{proof}
A similar result can be derived for the LC-PF model as well. It is omitted for brevity. In the next section, we detail numerical simulations on learning the grid topology using our graphical model based framework.

\section{Numerical Simulations}
\label{sec:simulations}
\begin{figure}[bt]
\centering\hfill
\subfigure[]{\includegraphics[width=0.07\textwidth,height =.22\textwidth]{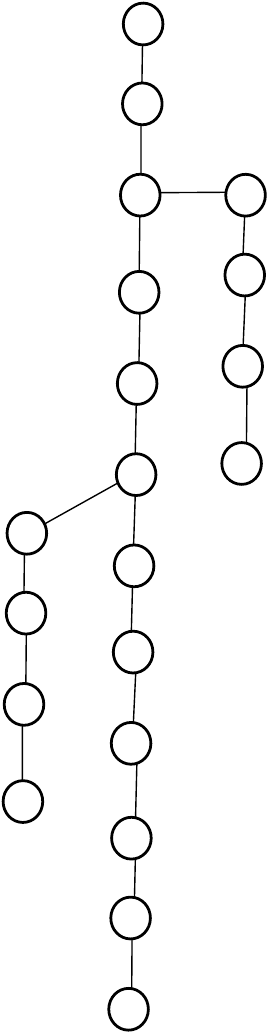}\label{fig:20bus}}\hfill
\subfigure[]{\includegraphics[width=0.07\textwidth,height =.22\textwidth]{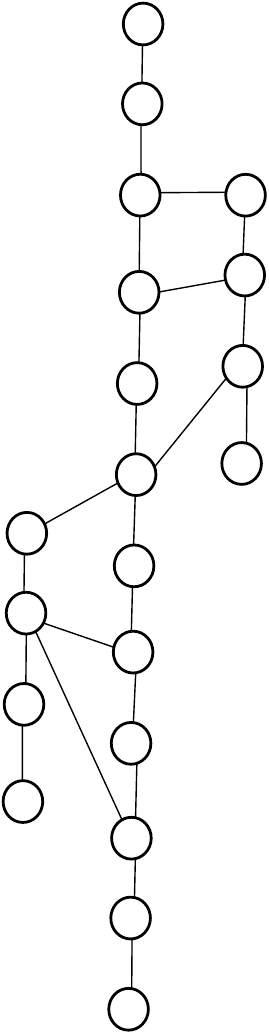}\label{fig:20busb}}\hfill
\subfigure[]{\includegraphics[width=0.07\textwidth,height =.22\textwidth]{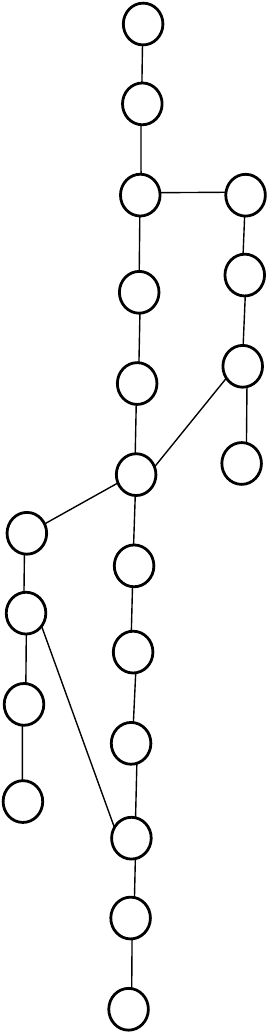}\label{fig:20busc}}\hfill
\vspace{-.25cm}
\caption{(a) $20$ bus radial system (b) $20$ bus loopy grid with minimum cycle length $4$ (c) $20$ bus loopy grid with minimum cycle length $7$}\label{fig:case}
\vspace{-3mm}
\end{figure}
We demonstrate results for Algorithm $1$ (neighborhood counting) and Algorithm $2$ (thresholding) in extracting the operational edge set ${\cal E}$ of power grids using nodal voltage measurements in DC or LC-PF. We consider a $20$ bus radial case \cite{testcase2,dekapscc} and extend it to two loopy grids with minimum cycle length $4$ and $7$ (greater than $3$) as shown in Fig.~\ref{fig:case}. The nodal injection fluctuations are modelled by uncorrelated zero-mean Gaussian random variables. We use DC-PF and LC-PF models to generate i.i.d. samples of nodal voltages and use them to estimate the inverse covariance matrix for the graphical model through Graphical Lasso. The estimated matrix is then input to Algorithms $1$ and $2$ to determine the grid topology. Fig.~\ref{fig:errors_20_neigh} plots the average absolute estimation errors (sum of false positives and false negatives in the estimated topology) in Algorithm $1$ using voltage samples generated by LC-PF model for the radial and loopy test systems in Figs.~\ref{fig:20bus} and \ref{fig:20busc} respectively. Note that the minimum cycle lengths in these two cases is greater than $6$ (sufficient for exact recovery by Algorithm $1$). Thus, increasing the number of samples leads to a reduction in edge detection errors. Next, in Fig.~\ref{fig:errors_20_thres}, we present the performance of Algorithm $2$ in learning the radial and loopy (minimum cycle length $4$) test systems in Figs.~\ref{fig:20bus} and \ref{fig:20busb} using voltage samples in both DC and LC-PF. The performance of Algorithm $2$ outshines that of Algorithm $1$ in terms of number of samples needed.

Finally, we consider the $14$ bus loopy IEEE test case \cite{testsystem} that includes $3$ node cycles as shown in Fig.~\ref{fig:14bus}. The performance of Algorithm $2$ in estimating the true edges using phase angle measurements from DC-PF is shown in Fig.~\ref{fig:error_14_thres}. As learning of loopy grid with cycle length $3$ is not guaranteed to be exact, the average errors decay to a non-zero value for higher sample values. Our ongoing work include efforts to select optimal thresholds in Algorithms $1$ and $2$ to improve their performance.

\begin{figure}[bt]
\centering
\includegraphics[width=0.4\textwidth,height = .34\textwidth]{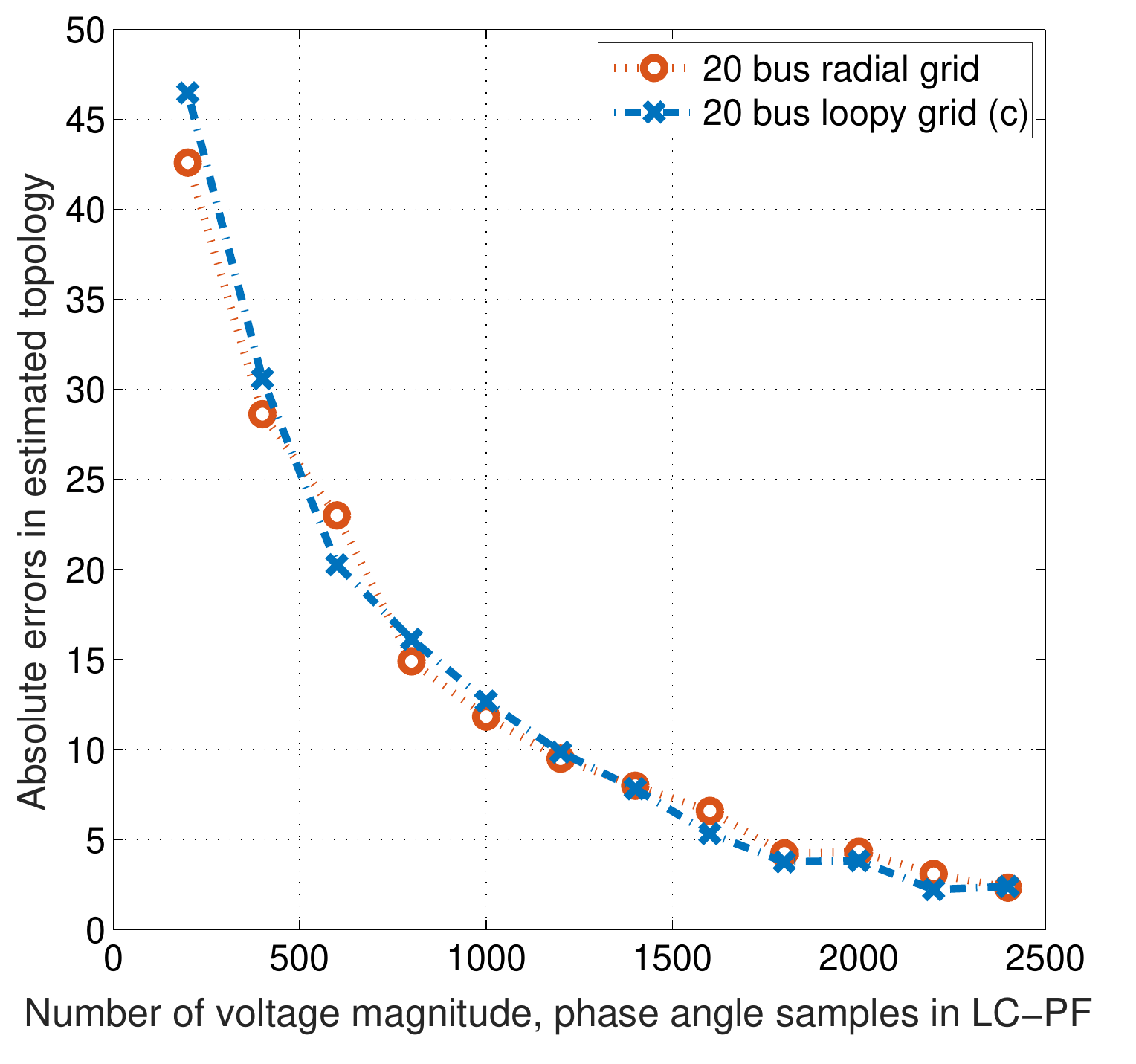}
\caption{Accuracy of Algorithm $1$ with number of voltage measurement samples in LC-PF for radial and loopy test systems in Figs.~\ref{fig:20bus}, \ref{fig:20busc}.}
\label{fig:errors_20_neigh}
\vspace{-3mm}
\end{figure}
\begin{figure}[bt]
\centering
\includegraphics[width=0.4\textwidth,height = .34\textwidth]{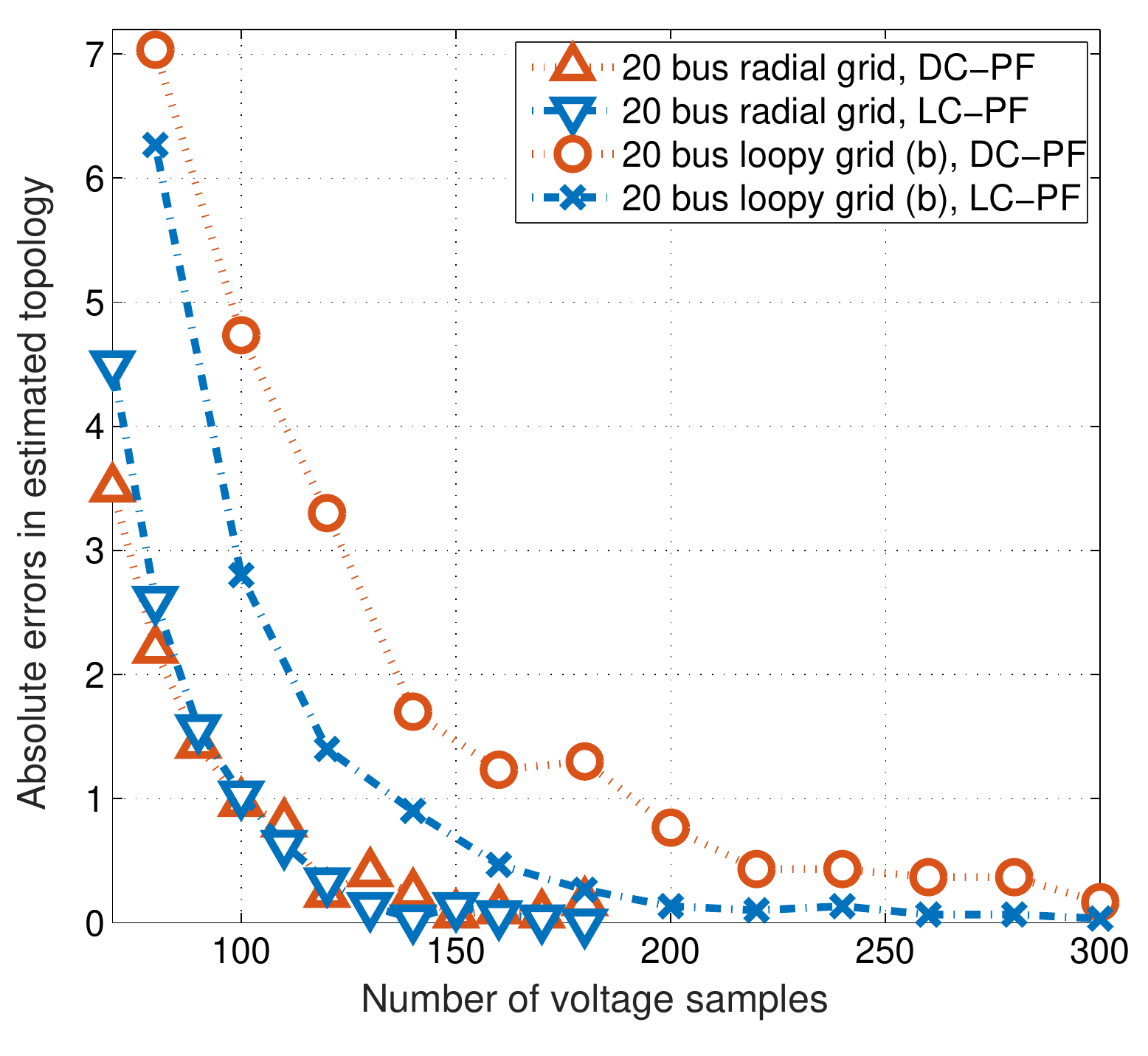}
\caption{Accuracy of Algorithm $2$ with number of voltage measurement samples in DC-PF and LC-PF for radial and loopy test systems in Figs.~\ref{fig:20bus}, \ref{fig:20busb}.}
\label{fig:errors_20_thres}
\vspace{-3mm}
\end{figure}
\label{sec:simulations}
\begin{figure}[bt]
\centering\hfill
\subfigure[]{\includegraphics[width=0.15\textwidth,height =.2\textwidth]{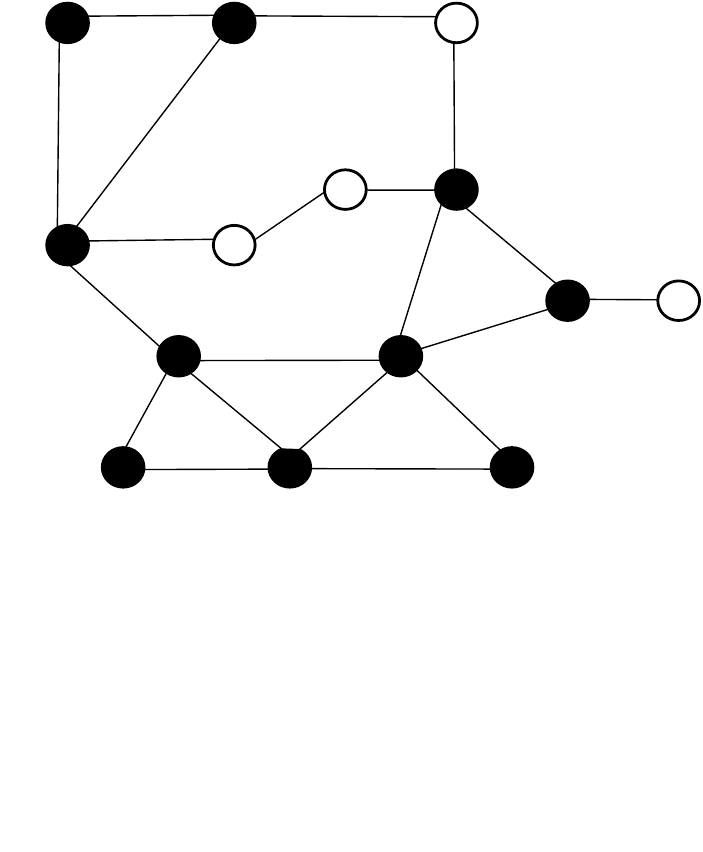}\label{fig:14bus}}\hfill
\subfigure[]{\includegraphics[width=0.33\textwidth,height = .3\textwidth]{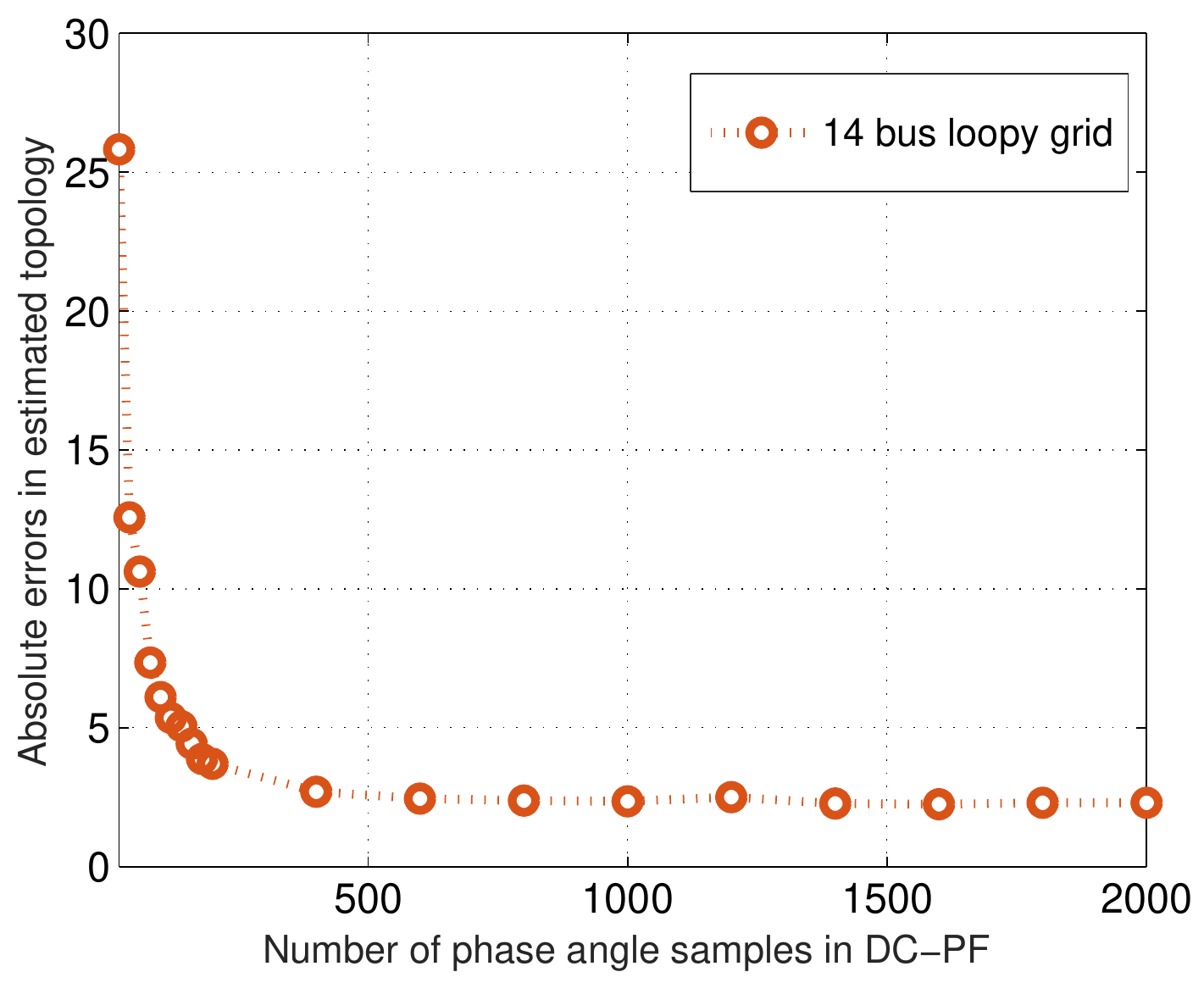}\label{fig:error_14_thres}}\hfill
\vspace{-.25cm}
\caption{(a) $14$-bus loopy IEEE test case. The nodes part of triangles (3 node cycles) are colored solid black. (b) Accuracy of Algorithm $2$ with number of phase angle samples in DC-PF for the $14$ bus system.}
\vspace{-3mm}
\end{figure}

\section{Conclusion}
\label{sec:conclusion}
In this document, we discuss the problem of topology estimation of a loopy power grid from measurements of nodal voltages through a graphical model framework. We demonstrate that the estimated graphical model of voltages for two linear power flow models (DC-PF and LC-PF) includes true edges as well as spurious edges between two-hop neighbors in the power grid. We present two schemes to distinguish between true and fictitious edges. The first algorithm is based on neighborhood counting in the graphical model, while the second algorithm uses a thresholding operator on the inverse covariance matrix of nodal voltages. For loopy grids, we show that the neighborhood counting scheme is able to learn the topology if minimum cycle lengths are greater than six. On the other hand, the thresholding method recovers the correct topology as long as cycle lengths are greater than three. Further, for general loopy grids without any cycle length constraint, we provide sufficient conditions based on injections and line susceptances that enable exact recovery by the thresholding algorithm. We demonstrate the performance of our learning algorithms through numerical simulations. The advantage of our learning framework lies in the fact that it is entirely data driven. It requires only samples of nodal voltage magnitudes and/or phase angles as input and does not require additional information on line parameters and injection statistics.
\bibliographystyle{IEEEtran}
\bibliography{sigproc,FIDVR,SmartGrid,voltage,trees}

% Generated by IEEEtran.bst, version: 1.14 (2015/08/26)
\begin{thebibliography}{10}
\providecommand{\url}[1]{#1}
\csname url@samestyle\endcsname
\providecommand{\newblock}{\relax}
\providecommand{\bibinfo}[2]{#2}
\providecommand{\BIBentrySTDinterwordspacing}{\spaceskip=0pt\relax}
\providecommand{\BIBentryALTinterwordstretchfactor}{4}
\providecommand{\BIBentryALTinterwordspacing}{\spaceskip=\fontdimen2\font plus
\BIBentryALTinterwordstretchfactor\fontdimen3\font minus
  \fontdimen4\font\relax}
\providecommand{\BIBforeignlanguage}[2]{{%
\expandafter\ifx\csname l@#1\endcsname\relax
\typeout{** WARNING: IEEEtran.bst: No hyphenation pattern has been}%
\typeout{** loaded for the language `#1'. Using the pattern for}%
\typeout{** the default language instead.}%
\else
\language=\csname l@#1\endcsname
\fi
#2}}
\providecommand{\BIBdecl}{\relax}
\BIBdecl

\bibitem{hoffman2006practical}
R.~Hoffman, ``Practical state estimation for electric distribution networks,''
  in \emph{Power Systems Conference and Exposition, 2006. PSCE'06. 2006 IEEE
  PES}.\hskip 1em plus 0.5em minus 0.4em\relax IEEE, 2006, pp. 510--517.

\bibitem{PMU}
A.~Phadke, ``Synchronized phasor measurements in power systems,'' \emph{IEEE
  Computer Applications in Power}, vol.~6, no.~2, pp. 10--15, 1993.

\bibitem{micropmu}
A.~von Meier, D.~Culler, A.~McEachern, and R.~Arghandeh, ``Micro-synchrophasors
  for distribution systems,'' pp. 1--5, 2014.

\bibitem{FNET}
Z.~Zhong, C.~Xu, B.~J. Billian, L.~Zhang, S.-J.~S. Tsai, R.~W. Conners, V.~A.
  Centeno, A.~G. Phadke, and Y.~Liu, ``Power system frequency monitoring
  network (fnet) implementation,'' \emph{Power Systems, IEEE Transactions on},
  vol.~20, no.~4, pp. 1914--1921, 2005.

\bibitem{bolognani2013identification}
S.~Bolognani, N.~Bof, D.~Michelotti, R.~Muraro, and L.~Schenato,
  ``Identification of power distribution network topology via voltage
  correlation analysis,'' in \emph{Decision and Control (CDC), 2013 IEEE 52nd
  Annual Conference on}.\hskip 1em plus 0.5em minus 0.4em\relax IEEE, 2013, pp.
  1659--1664.

\bibitem{dekapscc}
D.~Deka, S.~Backhaus, and M.~Chertkov, ``Estimating distribution grid
  topologies: A graphical learning based approach,'' in \emph{Power Systems
  Computation Conference (PSCC), 2016}.\hskip 1em plus 0.5em minus 0.4em\relax
  IEEE, 2016, pp. 1--7.

\bibitem{dekatcns}
------, ``Structure learning and statistical estimation in distribution
  networks - part i,'' \emph{IEEE Trans. on Control of Net. Sys.,
  \textsl{accepted}}.

\bibitem{dekaecc}
------, ``Learning topology of the power distribution grid with and without
  missing data,'' in \emph{Control Conference (ECC), 2016 European}.\hskip 1em
  plus 0.5em minus 0.4em\relax IEEE, 2016, pp. 313--320.

\bibitem{dekasmartgridcomm}
------, ``Learning topology of distribution grids using only terminal node
  measurements,'' in \emph{IEEE Smartgridcomm}, 2016.

\bibitem{berkeley}
G.~Cavraro, R.~Arghandeh, A.~von Meier, and K.~Poolla, ``Data-driven approach
  for distribution network topology detection,'' \emph{arXiv preprint
  arXiv:1504.00724}, 2015.

\bibitem{arya}
V.~Arya, T.~Jayram, S.~Pal, and S.~Kalyanaraman, ``Inferring connectivity model
  from meter measurements in distribution networks,'' in \emph{Proceedings of
  the fourth international conference on Future energy systems}.\hskip 1em plus
  0.5em minus 0.4em\relax ACM, 2013, pp. 173--182.

\bibitem{sharon2012topology}
Y.~Sharon, A.~M. Annaswamy, A.~L. Motto, and A.~Chakraborty, ``Topology
  identification in distribution network with limited measurements,'' in
  \emph{Innovative Smart Grid Technologies (ISGT), 2012 IEEE PES}.\hskip 1em
  plus 0.5em minus 0.4em\relax IEEE, 2012, pp. 1--6.

\bibitem{ram_loop}
Y.~Liao, Y.~Weng, G.~Liu, and R.~Rajagopal, ``Urban distribution grid topology
  estimation via group lasso,'' \emph{arXiv preprint arXiv:1611.01845}, 2016.

\bibitem{sauravacc}
S.~Talukdar, D.~Deka, D.~Materassi, and M.~V. Salapaka, ``Exact topology
  reconstruction of radial dynamical systems with applications to distribution
  system of the power grid,'' in \emph{American Control Conference (ACC),
  \textsl{accepted}}, 2017.

\bibitem{sauravacm}
S.~Talukdar, D.~Deka, B.~Lundstrom, M.~Chertkov, and M.~V. Salapaka, ``Learning
  exact topology of a loopy power grid from ambient dynamics,'' in
  \emph{Proceedings of the Eighth International Conference on Future Energy
  Systems}, ser. e-Energy '17, 2017, pp. 222--227.

\bibitem{wainwright2008graphical}
M.~J. Wainwright and M.~I. Jordan, ``Graphical models, exponential families,
  and variational inference,'' \emph{Foundations and Trends{\textregistered} in
  Machine Learning}, vol.~1, no. 1-2, pp. 1--305, 2008.

\bibitem{abur2004power}
A.~Abur and A.~G. Exposito, \emph{Power system state estimation: theory and
  implementation}.\hskip 1em plus 0.5em minus 0.4em\relax CRC Press, 2004.

\bibitem{bolognani2016existence}
S.~Bolognani and S.~Zampieri, ``On the existence and linear approximation of
  the power flow solution in power distribution networks,'' \emph{Power
  Systems, IEEE Transactions on}, vol.~31, no.~1, pp. 163--172, 2016.

\bibitem{89BWa}
M.~Baran and F.~Wu, ``Optimal sizing of capacitors placed on a radial
  distribution system,'' \emph{Power Delivery, IEEE Transactions on}, vol.~4,
  no.~1, pp. 735--743, Jan 1989.

\bibitem{89BWb}
------, ``Optimal capacitor placement on radial distribution systems,''
  \emph{Power Delivery, IEEE Transactions on}, vol.~4, no.~1, pp. 725--734, Jan
  1989.

\bibitem{89BWc}
------, ``Network reconfiguration in distribution systems for loss reduction
  and load balancing,'' \emph{Power Delivery, IEEE Transactions on}, vol.~4,
  no.~2, pp. 1401--1407, Apr 1989.

\bibitem{gubner2006probability}
J.~A. Gubner, \emph{Probability and random processes for electrical and
  computer engineers}.\hskip 1em plus 0.5em minus 0.4em\relax Cambridge
  University Press, 2006.

\bibitem{yuan2007model}
M.~Yuan and Y.~Lin, ``Model selection and estimation in the gaussian graphical
  model,'' \emph{Biometrika}, vol.~94, no.~1, pp. 19--35, 2007.

\bibitem{tibshirani}
J.~Friedman, T.~Hastie, and R.~Tibshirani, ``Sparse inverse covariance
  estimation with the graphical lasso,'' \emph{Biostatistics}, vol.~9, no.~3,
  pp. 432--441, 2008.

\bibitem{lassograph}
N.~Meinshausen and P.~B{\"u}hlmann, ``High-dimensional graphs and variable
  selection with the lasso,'' \emph{The annals of statistics}, pp. 1436--1462,
  2006.

\bibitem{testcase2}
U.~Eminoglu and M.~H. Hocaoglu, ``A new power flow method for radial
  distribution systems including voltage dependent load models,''
  \emph{Electric Power Systems Research}, vol.~76, no. 1–3, pp. 106 -- 114,
  2005.

\bibitem{testsystem}
R.~Christie, ``Power system test archive,'' \emph{University of Washington},
  1999.

\end{thebibliography}
\end{document}